\documentclass[amstex,12pt,reqno]{amsart}
\usepackage{amsmath, amssymb, amsthm}
\usepackage[dvipdfmx]{graphicx}

\setlength{\oddsidemargin}{0cm}
\setlength{\evensidemargin}{0cm}
\setlength{\textwidth}{16cm}
\setlength{\footskip}{30pt}

\newtheorem{theorem}{Theorem}[section]
\newtheorem{lemma}[theorem]{Lemma}
\newtheorem{proposition}[theorem]{Proposition}

\theoremstyle{definition}

\newtheorem*{remark}{Remark}

\title[Beurling--Ahlfors extension by heat kernel]
{Beurling--Ahlfors extension by heat kernel, ${\rm A}_\infty$-weights for VMO, and vanishing Carleson measures}

\author[H. Wei]{Huaying Wei} 
\address{Department of Mathematics and Statistics, Jiangsu Normal University \endgraf Xuzhou 221116, PR China} 
\email{hywei@jsnu.edu.cn} 

\author[K. Matsuzaki]{Katsuhiko Matsuzaki}
\address{Department of Mathematics, School of Education, Waseda University \endgraf
Shinjuku, Tokyo 169-8050, Japan}
\email{matsuzak@waseda.jp}

\subjclass[2010]{Primary 30C62, 42A45; Secondary 30H35, 26A46}
\keywords{quasiconformal extension, heat kernel, $A_\infty$-weight, strongly symmetric homeo\-morphism, BMO, VMO, 
vanishing Carleson measure}
\thanks{Research supported by the National Natural Science Foundation of China (Grant No. 11501259)
and Japan Society for the Promotion of Science (KAKENHI 18H01125).}

\begin{document}

\maketitle

\begin{abstract}
We investigate a variant of the Beurling--Ahlfors extension 
of quasisymmetric homeo\-morphisms of the real line that is given by the convolution of the heat kernel, and prove that
the complex dilatation of such a quasiconformal extension of a strongly symmetric homeomorphism (i.e. its derivative is
an ${\rm A}_\infty$-weight whose logarithm is in VMO) induces a vanishing Carleson measure on the upper half-plane.
\end{abstract}

\section{Introduction}
Beurling and Ahlfors \cite{BA} characterized the boundary value of a quasiconformal homeo\-morphism of
the upper half-plane $\mathbb U$ onto itself as a quasisymmetric homeomorphism $f$ of the real line $\mathbb R$.
Here, an increasing homeomorphism $f:\mathbb R \to \mathbb R$ is {\it quasisymmetric} if there is a constant $\rho >1$ such that
$|f(2I)| \leq \rho |f(I)|$ for any bounded interval $I \subset \mathbb R$, where $|\cdot|$ is the Lebesgue measure and
$2I$ denotes the interval of the same center as $I$
with $|2I|=2|I|$.
They proved that any quasisymmetric homeomorphism of $\mathbb R$ extends continuously to 
a quasiconformal homeomorphism $F:\mathbb U \to \mathbb U$ in a certain explicit way. This is called the
{\it Beurling--Ahlfors extension}.

Let $\phi(x)=\frac{1}{2} 1_{[-1,1]}(x)$ and $\psi(x)=\frac{r}{2} 1_{[-1,0]}(x)+\frac{-r}{2} 1_{[0,1]}(x)$
for some $r>0$, where $1_E$ denotes the characteristic function of $E \subset \mathbb R$. For any function $\varphi(x)$ 
on $\mathbb R$ and for $t>0$, we set $\varphi_t(x)=\frac{1}{t} \varphi(\frac{x}{t})$.
Then, for a quasisymmetric homeomorphism $f$, the Beurling--Ahlfors extension
$F(x,t)=(U(x,t),V(x,t))$ for $(x,t) \in \mathbb U$ is defined by the convolutions
$$
U(x,t)=(f \ast \phi_t)(x), \quad V(x,t)=(f \ast \psi_t)(x).
$$
The parameter $r$ may change when we consider a problem of estimating
the maximal dilatation of the Beurling--Ahlfors extension $F$ in terms of the quasisymmetry 
constant of $f$ related to the doubling constant $\rho$. In particular, when we investigate the asymptotic conformality of 
possible quasiconformal extensions $F(x,t)$ of $f$ as $t \to 0$,
the Beurling--Ahlfors extension of $r=2$ gives a powerful tool, as is shown in Carleson \cite{Ca}. 

Modification and variation to the Beurling--Ahlfors extension have been made by replacing the functions
$\phi$ and $\psi$. These methods are particularly effective for a study of relevant problems in harmonic analysis.
A locally integrable function $h$ on $\mathbb R$ is of {\it BMO} (denoted by $h \in {\rm BMO}(\mathbb R)$) if
$$
\Vert h \Vert_{\rm BMO}=\sup_{I \subset \mathbb R}\frac{1}{|I|} \int_I |h(x)-h_I| dx<\infty,
$$
where the supremum is taken over all bounded intervals $I$ on $\mathbb R$ and $h_I$ denotes the integral mean of $h$
over $I$.
Semmes \cite{Se} took $\phi$ and $\psi$ in $C^\infty(\mathbb R)$ supported on $[-1,1]$ such that
$\phi$ is an even function with $\int_{\mathbb R} \phi(x)dx=1$ and $\psi$ is an odd function with $\int_{\mathbb R} x\psi(x)dx=-1$.
It was proved that if a quasisymmetric homeomorphism $f:\mathbb R \to \mathbb R$ is locally absolutely continuous
and $\log \omega$ for $\omega=f'$ is in ${\rm BMO}(\mathbb R)$ with the norm $\Vert \log \omega \Vert_{\rm BMO}$ small,
then this modified Beurling--Ahlfors extension $F$ is quasiconformal such that $\frac{1}{t}|\mu_F(x,t)|^2dxdt$ is a
Carleson measure on $\mathbb U$, where $\mu_F=\bar \partial F/\partial F$ is the complex dilatation of $F$.
Here, a measure $\lambda(x,t)dx dt$ on $\mathbb U$ is called a {\it Carleson measure} if
$$
\Vert \lambda \Vert_c^{1/2}=\sup_{I \subset \mathbb R} \frac{1}{|I|}\int_0^{|I|}\!\!\int_I \lambda(x,t)dxdt<\infty,
$$
where the supremum is also taken over all bounded intervals $I$. The Carleson norm of $\frac{1}{t}|\mu_F(x,t)|^2dxdt$ is estimated in terms of 
$\Vert \log \omega \Vert_{\rm BMO}$. The arguments rely on the John--Nirenberg inequality for BMO functions, so
the assumption on the smallness of the BMO norm is needed for a single application of the Beurling--Ahlfors extension.

In the paper by Fefferman, Kenig and Pipher \cite{FKP}, a variant of the Beurling--Ahlfors extension was also
utilized, where $\phi(x)=\frac{1}{\sqrt{\pi}}e^{-x^2}$ and $\psi(x)=\phi'(x)=\frac{-2x}{\sqrt{\pi}}e^{-x^2}$.
In this case, the $x$-derivative $U_x(x,\sqrt{t})=(\omega \ast \phi_{\sqrt{t}})(x)$ for example is the solution of the heat equation
having $\omega=f'$ as the initial state, which is represented by the {\it heat kernel} $\phi_{\sqrt t}(x)=\frac{1}{\sqrt{\pi t}}e^{-x^2/t}$.
We see from their arguments that
if $f:\mathbb R \to \mathbb R$ is locally absolutely continuous and the derivative $\omega=f'$ is an ${\rm A}_\infty$-weight
introduced by Muckenhoupt (see \cite{CF}), which implies that $\log \omega \in {\rm BMO}(\mathbb R)$, then this variant of 
the Beurling--Ahlfors extension $F$ is quasiconformal and induces a Carleson measure $\frac{1}{t}|\mu_F(x,t)|^2dxdt$ as before.
No assumption on the BMO norm is necessary.

In this present paper, in view of the importance of the arguments in \cite{FKP}, 
we give a rather detailed proof of the aforementioned results by picking up related parts from the original paper and
complementing necessary arguments between the sentences in it. 
Sections 2 and 3 are devoted to these arrangements of the
theorems in \cite{FKP}. Then in Section 4, we adapt the arguments involving the BMO norm in \cite{Se} 
to the variant of the Beurling--Ahlfors extension $F$ of $f$ given by the heat kernel.
To this end, we generalize the proof in \cite{Se} for $\phi$ and $\psi$ of compact supports
to those rapidly decreasing functions of non-compact supports,
which is a novelty in this paper. As a result, we obtain an estimate of the Carleson norm of
$\frac{1}{t}|\mu_F(x,t)|^2dxdt$ in terms of the BMO norm of $\log \omega$ when it is small. This is valid even if 
the smallness is localized as in the case mentioned next. 

It is said that $h \in {\rm BMO}(\mathbb R)$ is of {\it VMO} if
$$ 
\lim_{|I| \to 0}\frac{1}{|I|} \int_I |h(x)-h_I| dx=0.
$$
Correspondingly, a Carleson measure $\lambda(x,t)dxdt$ is {\it vanishing} if
$$
\lim_{|I| \to 0}\frac{1}{|I|}\int_0^{|I|}\!\!\int_I \lambda(x,t)dxdt=0.
$$
Thus, we can show that if $\log \omega \in {\rm VMO}(\mathbb R)$ for an ${\rm A}_\infty$-weight $\omega=f'$ then 
the variant of the Beurling--Ahlfors extension $F$ of $f$ by the heat kernel yields that 
the Carleson measure $\frac{1}{t}|\mu_F(x,t)|^2dxdt$ is vanishing.
This is a problem asked by Shen \cite{Sh19} in his study of the VMO Teichm\"uller space on the real line.

\section{Heat equation for ${\rm A}_\infty$-weights}\label{sec1}

This section is an exposition of a part of Section 3 of 
Fefferman, Kenig and Pipher \cite{FKP}.

For an ${\rm A}_\infty$-weight $\omega$ on the real line $\mathbb R$, we define
$$
u(x,t)=(\omega \ast \Phi_t)(x) \quad (x \in \mathbb R,\ t>0),
$$
where $\Phi_t(x)$ is the heat kernel given by
$$
\Phi_t(x)=\frac{1}{\sqrt{\pi t}}\, e^{-\frac{x^2}{t}}.
$$
We remark that this $\Phi_t$ comes from
$\Phi(x)=\frac{1}{\sqrt{\pi}} e^{-x^2}$ and
the definition of $\varphi_t(x)=\frac{1}{\sqrt{t}}\varphi(\frac{x}{\sqrt{t}})$ for a general function $\varphi$ in this section is slightly different from that in the other sections.
This satisfies $H \Phi_t(x)=0$ 
for 
$$
H=\frac{\partial}{\partial t}-\frac{\partial^2}{4\partial x^2},
$$
and hence $Hu(x,t)=0$. 

The solution $u$ for the heat equation with the initial state $\omega$ satisfies the following:

\begin{lemma}\label{lem1}
There are constants $c, C>0$ such that
$$
c u(x,t) \leq \frac{1}{\sqrt t}\int_{|x-y|<\sqrt t} \omega(y)dy \leq C u(x,t)
$$
for any $x \in \mathbb R$ and $t>0$.
\end{lemma}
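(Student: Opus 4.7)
The plan is to prove the two inequalities separately. For the upper bound on the averaged integral, I would use the elementary observation that on the set $\{|x-y|<\sqrt t\}$ the exponent satisfies $(x-y)^2/t<1$, so $e^{-(x-y)^2/t}>e^{-1}$. Inserting and removing this factor inside the Gaussian convolution immediately yields
\[
\frac{1}{\sqrt t}\int_{|x-y|<\sqrt t}\omega(y)\,dy \;\leq\; e\sqrt{\pi}\cdot \frac{1}{\sqrt{\pi t}}\int_{|x-y|<\sqrt t} \omega(y)e^{-(x-y)^2/t}\,dy \;\leq\; e\sqrt{\pi}\,u(x,t),
\]
which gives the constant $C$. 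Notice that no property of $\omega$ beyond local integrability is used here.

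For the opposite inequality, the idea is to decompose $u(x,t)$ dyadically according to distance from $x$. Writing
\[
u(x,t)=\frac{1}{\sqrt{\pi t}}\int_{|x-y|<\sqrt t}\omega(y)e^{-(x-y)^2/t}dy+\sum_{k=1}^{\infty}\frac{1}{\sqrt{\pi t}}\int_{2^{k-1}\sqrt t\leq|x-y|<2^k\sqrt t}\omega(y)e^{-(x-y)^2/t}dy,
\]
I would bound the $k$-th annular term trivially by $\frac{1}{\sqrt{\pi t}}e^{-4^{k-1}}\int_{|x-y|<2^k\sqrt t}\omega(y)dy$ using the minimum of the Gaussian on that annulus.

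The key step, and the place where the $\mathrm{A}_\infty$ hypothesis enters, is to replace the integral of $\omega$ over the large interval $\{|x-y|<2^k\sqrt t\}$ by a constant multiple of the integral over the central interval $\{|x-y|<\sqrt t\}$. This is exactly the doubling property, which is a consequence of $\omega\in \mathrm{A}_\infty$: there is $D>0$ such that for every bounded interval $I$, $\int_{2I}\omega\leq D\int_I\omega$. Iterating $k$ times gives $\int_{|x-y|<2^k\sqrt t}\omega\leq D^k\int_{|x-y|<\sqrt t}\omega$. Substituting this bound yields
\[
u(x,t)\;\leq\;\frac{1}{\sqrt{\pi t}}\Bigl(1+\sum_{k=1}^\infty D^k e^{-4^{k-1}}\Bigr)\int_{|x-y|<\sqrt t}\omega(y)\,dy,
\]
and the series converges because the Gaussian decay $e^{-4^{k-1}}$ crushes the geometric factor $D^k$. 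Setting $c$ to be the reciprocal of the resulting constant times $\sqrt{\pi}$ gives the lower estimate.

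The only real obstacle is checking that the tail-versus-doubling trade-off indeed closes, i.e.\ that $\sum D^k e^{-4^{k-1}}<\infty$; once one notices that the Gaussian decay is super-geometric this is automatic, so the lemma reduces to a careful bookkeeping of constants rather than any deep use of the $\mathrm{A}_\infty$ structure beyond doubling.
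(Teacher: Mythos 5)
Your proof is correct and follows essentially the same approach as the paper: the same trivial Gaussian lower bound on $\{|x-y|<\sqrt t\}$ giving $C=e\sqrt\pi$, and the same dyadic annular decomposition combined with the doubling property to sum the tail as $\sum_k D^k e^{-4^{k-1}}$.
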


\begin{proof}
We decompose the integral for the convolution as
$$
u(x,t)=\int_{|x-y|<\sqrt t} \omega(y)\Phi_t(x-y)dy+\sum_{n=1}^\infty \int_{2^{n-1} \sqrt{t} \leq |x-y|<2^n \sqrt{t}} \omega(y)\Phi_t(x-y)dy.
$$
Then, the second inequality in the statement is given by
\begin{align*}
u(x,t) &\geq \int_{|x-y|<\sqrt t} \omega(y)\Phi_t(x-y)dy \\
&\geq \Phi_t(\sqrt t) \int_{|x-y|<\sqrt t} \omega(y)dy 
= \frac{1}{e \sqrt \pi} \frac{1}{\sqrt t} \int_{|x-y|<\sqrt t} \omega(y)dy
\end{align*}
with $C=e \sqrt \pi$. 

For the first inequality, we use the doubling property of $\omega$: 
there is a constant $\rho>1$ such that
$$
\int_{2I} \omega(x)dx \leq \rho \int_{I} \omega(x)dx
$$
for any bounded interval $I \subset \mathbb R$. Then, we have
\begin{align*}
\sum_{n=1}^\infty \int_{2^{n-1} \sqrt{t} \leq |x-y|<2^n \sqrt{t}} \omega(y)\Phi_t(x-y)dy
&\leq \sum_{n=1}^\infty \Phi_t(2^{n-1}\sqrt t) \int_{|x-y|<2^n \sqrt{t}} \omega(y)dy\\
&\leq \sum_{n=1}^\infty \frac{\rho^n}{e^{4^{n-1}}\sqrt{\pi t}} \int_{|x-y|< \sqrt{t}} \omega(y)dy.
\end{align*}
Hence, for $c^{-1}=(1+\sum_{n=1}^\infty \frac{\rho^n}{e^{4^{n-1}}})/\sqrt \pi<\infty$,
we obtain the first inequality.
\end{proof}

We consider the spacial derivative $u'(x,t)=\frac{\partial}{\partial x}u(x,t)=(\omega \ast (\Phi_t)')(x)$,
where
$$
(\Phi_t)'(x)=-\frac{2x}{t\sqrt{\pi t}}\, e^{-\frac{x^2}{t}}=-\frac{2x}{t}\Phi_t(x).
$$

\begin{lemma}\label{lem2}
There is a constants $C_1>0$ such that
$$
|u'(x,t)| \leq \frac{C_1}{\sqrt t}\, u(x,t)
$$
for any $x \in \mathbb R$ and $t>0$.
\end{lemma}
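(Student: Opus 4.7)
The plan is to mimic the dyadic annulus decomposition used in the proof of Lemma \ref{lem1}, now applied to the kernel $(\Phi_t)'(x) = -\frac{2x}{t}\Phi_t(x)$ instead of $\Phi_t$. Starting from
$$
|u'(x,t)| \leq \frac{2}{t}\int_{\mathbb R}\omega(y)|x-y|\Phi_t(x-y)dy,
$$
I would split the integration into the central disk $\{|x-y|<\sqrt t\}$ and the annuli $A_n=\{2^{n-1}\sqrt t\leq |x-y|<2^n\sqrt t\}$ for $n\geq 1$.

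On the central disk I bound $|x-y|\Phi_t(x-y)\leq \sqrt t\cdot \frac{1}{\sqrt{\pi t}}=\frac{1}{\sqrt \pi}$, which gives a contribution of order $\frac{1}{t}\int_{|x-y|<\sqrt t}\omega(y)\,dy$. On each annulus $A_n$, I use the pointwise bound $|x-y|\Phi_t(x-y)\leq 2^n\sqrt t\cdot\frac{1}{\sqrt{\pi t}}e^{-4^{n-1}}=\frac{2^n}{\sqrt\pi}e^{-4^{n-1}}$, giving a contribution controlled by
$$
\frac{2^n}{t}e^{-4^{n-1}}\int_{|x-y|<2^n\sqrt t}\omega(y)\,dy.
$$
By iterating the doubling inequality for $\omega$ exactly as in Lemma \ref{lem1}, the integral over the ball of radius $2^n\sqrt t$ is bounded by $\rho^n\int_{|x-y|<\sqrt t}\omega(y)\,dy$. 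Summing in $n$, the series $\sum_{n\geq 1}2^n\rho^n e^{-4^{n-1}}$ converges because the Gaussian factor crushes the geometric growth, so the total contribution is a constant multiple of $\frac{1}{t}\int_{|x-y|<\sqrt t}\omega(y)\,dy$.

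Finally, to convert the local $L^1$ mean back into $u(x,t)$, I apply the first inequality of Lemma \ref{lem1}, which gives $\int_{|x-y|<\sqrt t}\omega(y)\,dy\leq c^{-1}\sqrt t\, u(x,t)$. Combining,
$$
|u'(x,t)|\leq \text{const}\cdot\frac{1}{t}\cdot\sqrt t\, u(x,t)=\frac{C_1}{\sqrt t}u(x,t),
$$
which is the claim. No step here is genuinely hard; the main bookkeeping issue is to verify that the two geometric series — with ratios $2\rho e^{-3}$-type factors coming from the annular decomposition — still converge, which they do thanks to the superexponential decay $e^{-4^{n-1}}$ of the Gaussian.
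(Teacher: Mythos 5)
Your argument is correct and follows the same route as the paper: decompose into the central disk and dyadic annuli, bound $(\Phi_t)'$ pointwise on each piece (your cruder bound $|x-y|\,\Phi_t(x-y)\le 1/\sqrt\pi$ on the disk is just as good as the paper's exact maximum $\sqrt 2/(\sqrt{e\pi}\,t)$), iterate the doubling inequality, and convert the localized mean back to $u(x,t)$ via Lemma~\ref{lem1}. One small slip in the final step: the bound you need, $\int_{|x-y|<\sqrt t}\omega(y)\,dy \le C\sqrt t\,u(x,t)$, is the \emph{second} inequality of Lemma~\ref{lem1} (with constant $C$), not the first with $c^{-1}$; the first inequality runs in the opposite direction, giving $\int_{|x-y|<\sqrt t}\omega(y)\,dy \ge c\sqrt t\,u(x,t)$.
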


\begin{proof}
We also use the decomposition 
$$
u'(x,t)=\int_{|x-y|<\sqrt t} \omega(y)(\Phi_t)'(x-y)dy
+\sum_{n=1}^\infty \int_{2^{n-1} \sqrt{t} \leq |x-y|<2^n \sqrt{t}} \omega(y)(\Phi_t)'(x-y)dy.
$$
The first term is estimated as
\begin{align*}
\left|\int_{|x-y|<\sqrt t} \omega(y)(\Phi_t)'(x-y)dy \right| 
&\leq \int_{|x-y|<\sqrt t} \omega(y)|(\Phi_t)'(x-y)|dy\\
& \leq \frac{\sqrt 2}{\sqrt{e \pi}\,t}  \int_{|x-y|<\sqrt t} \omega(y)dy.
\end{align*}
Here, we used the fact that 
$\max_{|x|<\sqrt t} |(\Phi_t)'(x)|=\sqrt 2/(\sqrt{e \pi}\,t)$ attained at $x=\sqrt{t}/\sqrt{2}$.
The remainder terms are estimated in the same way as before:
\begin{align*}
\sum_{n=1}^\infty \left |\int_{2^{n-1} \sqrt{t} \leq |x-y|<2^n \sqrt{t}} \omega(y)(\Phi_t)'(x-y)dy \right|
&\leq \sum_{n=1}^\infty |(\Phi_t)'(2^{n-1}\sqrt t)| \int_{|x-y|<2^n \sqrt{t}} \omega(y)dy\\
&\leq \sum_{n=1}^\infty \frac{(2\rho)^n}{e^{4^{n-1}}\sqrt{\pi}\, t} \int_{|x-y|< \sqrt{t}} \omega(y)dy.
\end{align*}
Then, by using the constant $C>0$ in Lemma \ref{lem1}, we have
$$
|u'(x,t)| \leq \left( \frac{\sqrt 2}{\sqrt{e \pi}}+\sum_{n=1}^\infty \frac{(2\rho)^n}{e^{4^{n-1}}\sqrt{\pi}} \right) 
\frac{1}{t} \int_{|x-y|< \sqrt{t}} \omega(y)dy
\leq \frac{C_1}{\sqrt t}\, u(x,t)
$$
for the appropriate constant $C_1>0$ involving $C$.
\end{proof}

We prove the following necessary condition for a weight $\omega$ to be in ${\rm A}_{\infty}(\mathbb R)$.
This corresponds to \cite[Theorem 3.4]{FKP}.

\begin{theorem}\label{thm3}
The solution $u$ for an initial state $\omega \in {\rm A}_{\infty}(\mathbb R)$ satisfies that 
$$
\frac{1}{t} \int_0^{t^2} \!\! \int_{|x-x_0|<t} \frac{u'(x,s)^2}{u(x,s)^2} dx ds
$$
is uniformly bounded for any $x_0 \in \mathbb R$ and $t>0$.
\end{theorem}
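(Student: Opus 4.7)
The strategy is to exploit the PDE satisfied by $v := \log u$ and apply a Caccioppoli-type argument. Since $u>0$ on $\mathbb U$ and $Hu=0$, combining $u_s = u_{xx}/4$ with $v_x = u_x/u$ and $v_{xx} = u_{xx}/u - v_x^2$ yields the Riccati-type identity
$$
4 v_s = v_{xx} + v_x^2, \qquad \text{equivalently} \qquad v_x^2 = 4 v_s - v_{xx}.
$$
Note that the pointwise bound $v_x^2 \leq C_1^2/s$ coming from Lemma \ref{lem2} is not integrable in $s$, so one must use the identity above rather than pointwise control.

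I would fix a smooth cutoff $\eta \geq 0$ with $\eta \equiv 1$ on $Q := (x_0-t, x_0+t) \times (0, t^2)$, supported in the doubled parabolic box $Q^* := (x_0-2t, x_0+2t) \times (0, 2t^2)$, and vanishing at $s=0$, at $s = 2t^2$, and at $|x-x_0|=2t$, with $|\eta_x| \leq C/t$ and $|\eta_s| \leq C/t^2$. Multiplying $v_x^2 = 4v_s - v_{xx}$ by $\eta^2$ and integrating over $\mathbb U$, the $s$-integration by parts has no boundary contribution by the choice of $\eta$, and since $\int(\eta^2)_s\,ds \equiv 0$ for each $x$ we may replace $v$ by $v-c$ for any constant $c$; the $x$-integration by parts gives $-\int \eta^2 v_{xx} = 2\int \eta\eta_x v_x$, which Cauchy--Schwarz absorbs into $\tfrac{1}{2}\int \eta^2 v_x^2 + 2\int \eta_x^2$. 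The resulting Caccioppoli-type inequality reads
$$
\int \eta^2 v_x^2\, dx\,ds \;\leq\; 4 \int \eta_x^2\, dx\,ds + 8 \int |(\eta^2)_s|\,|v-c|\, dx\,ds.
$$
The first term is bounded by $C t^{-2} \cdot |Q^*| \lesssim t$; for the second I would take $c$ to be the average of $v$ over $Q^*$.

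The matter then reduces to proving the oscillation bound
$$
\frac{1}{|Q^*|} \int_{Q^*} |v(x,s)-c|\, dx\,ds \leq M,
$$
with $M$ depending only on the $A_\infty$ constants of $\omega$. By Lemma \ref{lem1}, $u(x,s) \asymp \omega_{I(x,\sqrt s)}$ where $I(x,r):=(x-r,x+r)$, so $v(x,s) = \log \omega_{I(x,\sqrt s)} + O(1)$; the reverse-Jensen inequality characterizing $A_\infty$ then gives $\log \omega_I = (\log \omega)_I + O(1)$ uniformly in $I$, and the standard BMO fact that averages of $\log \omega$ over nearby intervals of comparable length differ by a bounded amount (all relevant intervals lying inside $(x_0-3t, x_0+3t)$) yields the desired oscillation estimate. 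Combining everything gives $\int_Q v_x^2\,dx\,ds \lesssim t$, and dividing by $t$ finishes the proof. The main obstacle is this last oscillation estimate: it forces one to use both the approximation of $u(x,s)$ by a local average of $\omega$ (Lemma \ref{lem1}) and the BMO-type control of the log-averages of an $A_\infty$-weight across parabolic boxes in $\mathbb U$.
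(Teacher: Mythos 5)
Your overall strategy is sound and genuinely different from the paper's: you integrate the Riccati identity $4v_s = v_{xx} + v_x^2$ against a cutoff (a parabolic Caccioppoli/energy argument), whereas the paper integrates $H\log u = \tfrac{1}{4}v_x^2$ over the sharp box $(x_0-t,x_0+t)\times(0,t^2)$, controlling the lateral boundary term directly by Lemma~\ref{lem2} and the top/bottom term by the separate Lemma~\ref{lem4}. Your Caccioppoli inequality, the absorption of $2\int\eta\eta_x v_x$ by Cauchy--Schwarz, and the reduction to the oscillation bound $\tfrac{1}{|Q^*|}\int_{Q^*}|v-c|\lesssim 1$ (via Lemma~\ref{lem1}, Hru\v{s}\v{c}ev's reverse-Jensen characterization of ${\rm A}_\infty$, and the BMO estimate on nested averages of $\log\omega$) all work: the apparent $\log(t/\sqrt s)$ singularity from comparing $I(x,\sqrt s)$ to $I(x_0,Nt)$ is integrable in $\tfrac{ds\,dx}{|Q^*|}$.

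However, there is a genuine inconsistency in your cutoff. You require $\eta\equiv 1$ on $Q=(x_0-t,x_0+t)\times(0,t^2)$ and simultaneously $\eta=0$ at $s=0$ with $|\eta_s|\leq C/t^2$. Since $Q$ abuts $s=0$, $\eta$ would have to jump from $0$ to $1$ over an arbitrarily short $s$-interval, forcing $|\eta_s|$ to blow up; no such $\eta$ exists. As a consequence, the claim that ``the $s$-integration by parts has no boundary contribution'' and that $\int(\eta^2)_s\,ds\equiv 0$ both fail. The fix is to let $\eta(\cdot,0)$ be a nontrivial $x$-cutoff (equal to $1$ on $(x_0-t,x_0+t)$, supported in $(x_0-2t,x_0+2t)$); the $s$-integration by parts then produces a bottom boundary term $-4\int\eta(x,0)^2\bigl(\log\omega(x)-c\bigr)\,dx$, which is bounded by $Ct\Vert\log\omega\Vert_{\rm BMO}$ once $c$ is the mean of $\log\omega$ over $I(x_0,2t)$. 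This boundary term is unavoidable and plays exactly the role that the paper's Lemma~\ref{lem4} plays: it is where the $A_\infty$/BMO hypothesis on $\log\omega$ enters at the initial-state level. Alternatively one can run a limiting argument with cutoffs $\eta_\epsilon$ supported in $s>\epsilon/2$ and let $\epsilon\to 0$, but that limit again requires $\int_{|x-x_0|<2t}|\log\omega(x)-c|\,dx=O(t)$, i.e.\ the same BMO control. Once this boundary term is acknowledged and estimated, the rest of your argument closes correctly.
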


\begin{proof}
A simple computation using $Hu=0$ shows that
\begin{align*}
H \log u(x,t)&=\frac{\partial}{\partial t} \log u(x,t)-\frac{\partial^2}{4 \partial x^2} \log u(x,t)\\
&=\frac{H u(x,t)}{u(x,t)}+\frac{u'(x,t)^2}{4 u(x,t)^2}=\frac{u'(x,t)^2}{4 u(x,t)^2}.
\end{align*}
This yields that
\begin{align*}
&\quad \frac{1}{4}\int_0^{t^2} \!\! \int_{|x-x_0|<t} \frac{u'(x,s)^2}{u(x,s)^2} dx ds
=\int_0^{t^2} \!\! \int_{|x-x_0|<t} H \log u(x,s) dx ds \\
&=\int_{|x-x_0|<t}\! \int_0^{t^2} \frac{\partial}{\partial s} \log u(x,s) ds dx
-\int_0^{t^2} \!\! \int_{|x-x_0|<t} \frac{\partial^2}{4\partial x^2} \log u(x,s) dx ds \\
&=\int_{|x-x_0|<t} (\log u(x,t^2)-\log \omega(x)) dx
-\frac{1}{4}\int_0^{t^2} \left(\frac{u'(x_0+t,s)}{u(x_0+t,s)}- \frac{u'(x_0-t,s)}{u(x_0-t,s)}\right)ds. \tag{1}
\end{align*}
Here, by Lemma \ref{lem2}, the second term of $(1)$ is bounded by
$$
\frac{2}{4} \int^{t^2}_0 \frac{C_1}{\sqrt s} ds=C_1 t.
$$
For the estimate of the first term of $(1)$, we use the following result.

\begin{lemma}\label{lem4}
Any weight $\omega \in {\rm A}_{\infty}(\mathbb R)$ satisfies that 
$$
\frac{1}{t} \int_{|x-x_0|<t} (\log u(x,t^2)-\log \omega(x)) dx
$$
is uniformly bounded for any $x_0 \in \mathbb R$ and $t>0$.
\end{lemma}

\begin{proof}
Lemma \ref{lem1} implies that 
$$
u(x,t^2) \asymp \frac{1}{t} \int_{|x-y|<t} \omega(y)dy
$$
uniformly for all $x \in \mathbb R$ and $t>0$ (the notation $\asymp$ is used in this sense hereafter). Moreover, the doubling property of $\omega$
implies that if $x$ satisfies $|x-x_0|<t$ for
a fixed $x_0$, then
$$
\int_{|x-y|<t} \omega(y)dy \asymp \int_{|x_0-y|<t} \omega(y)dy.
$$
Hence, if $|x-x_0|<t$ then 
$$
u(x,t^2) \asymp \frac{1}{t} \int_{|x_0-y|<t} \omega(y)dy
$$
independently of $t>0$. This shows that there is a constant $C_2>0$ such that
$$
\int_{|x-x_0|<t} \log u(x,t^2) dx \leq 2t \log \left(\frac{1}{2t} \int_{|x_0-y|<t} \omega(y)dy \right) +tC_2.
$$

It is known that $\omega \in {\rm A}_\infty(\mathbb R)$ if and only if
$$
\frac{1}{|I|} \int_I \omega(x)dx \asymp \exp \left(\frac{1}{|I|} \int_I \log \omega(x) dx \right)
$$
for every bounded interval $I \subset \mathbb R$ (see \cite{Hr}). This implies that there is a constant $C_3>0$ such that
$$
2t\log \left(\frac{1}{2t} \int_{|x_0-y|<t} \omega(y)dy\right)-\int_{|x_0-y|<t} \log \omega(y)dy \leq C_3 t.
$$
Combining this with the previous estimate, we have
$$
\int_{|x-x_0|<t} (\log u(x,t^2)-\log \omega(x)) dx \leq (C_2+C_3)t,
$$
which proves the required inequality.
\end{proof} 

\noindent
{\it Proof of Theorem \ref{thm3} continued.} By the above estimates of the last line of $(1)$, we obtain that
$$
\frac{1}{4}\int_0^{t^2} \!\! \int_{|x-x_0|<t} \frac{u'(x,s)^2}{u(x,s)^2} dx ds \leq (C_1+C_2+C_3)t,
$$
and thus proves the statement.
\end{proof}

We can obtain a similar result to Theorem \ref{thm3} by using 
the spacial second derivative $u''(x,t)=\frac{\partial^2}{\partial x^2}u(x,t)=(\omega \ast (\Phi_t)'')(x)$.
This result is necessary for the proof of Theorem \ref{carleson} in the next section, so 
we formulate this especially and give a proof in our paper.
We apply the argument in \cite[Lemma 3.2]{FKP}.

\begin{theorem}\label{thm5}
The solution $u$ for an initial state $\omega \in {\rm A}_{\infty}(\mathbb R)$ satisfies that
$$
\frac{1}{t} \int_0^{t^2} \!\! \int_{|x-x_0|<t} s\,\frac{u''(x,s)^2}{u(x,s)^2} dx ds
$$
is uniformly bounded for any $x_0 \in \mathbb R$ and $t>0$.
\end{theorem}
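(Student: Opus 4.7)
My plan is to adapt the identity-and-divergence strategy of Theorem \ref{thm3}, applied now to the test function $s(L')^2$, where $L := \log u$ (well-defined since $u>0$). The integrand can be rewritten in terms of $L$: from $Hu = 0$ we have $u_s = u''/4$, so $L_s = u''/(4u)$ and
$$\frac{u''}{u} = L'' + (L')^2 = 4L_s,$$
so bounding $\iint s(u''/u)^2 dxds$ reduces (up to handling lower-order terms) to bounding $\iint s(L'')^2 dxds$ over the Carleson box $R := (x_0-t,x_0+t)\times(0,t^2)$.

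The key identity comes from $HL = (L')^2/4$ (already derived in the proof of Theorem \ref{thm3}) together with the product rule $H(fg) = fH(g) + gH(f) - f_x g_x/2$. This first gives $HL' = (HL)' = L'L''/2$, and then
$$H((L')^2) = 2L'\, H(L') - (L'')^2/2 = (L')^2 L'' - (L'')^2/2.$$
Since $H(s) = 1$ and $\partial_x s = 0$, multiplication by $s$ produces
$$H(s(L')^2) = (L')^2 + s(L')^2 L'' - \tfrac{s}{2}(L'')^2.$$
Integrating this over $R$ and solving for the $(L'')^2$ term yields
$$\tfrac{1}{2}\iint_R s(L'')^2 dxds = \iint_R (L')^2 dxds + \iint_R s(L')^2 L''\, dxds - \iint_R H(s(L')^2)\, dxds.$$

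I now bound the three summands on the right by $Ct$. (a) The first is $\leq Ct$ directly by Theorem \ref{thm3}. (b) In the second, $(L')^2 L'' = \partial_x((L')^3/3)$ is an exact $x$-derivative, so the inner integral collapses to a boundary value, and Lemma \ref{lem2} gives $|L'|\leq C_1/\sqrt s$; hence the term is $\lesssim \int_0^{t^2} s\cdot s^{-3/2} ds = 2t$. (c) For the third, apply the divergence formula of $(1)$ in the proof of Theorem \ref{thm3} to $f = s(L')^2$: the $s$-boundary pieces at $s=t^2$ and $s\to 0^+$ are each bounded by $|I|\sup s(L')^2 \leq 2tC_1^2$, and the $x$-boundary piece is $\tfrac{1}{2}\int_0^{t^2} s\,L'L''\bigr|_{x_0\pm t} ds$. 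To control this last one I need a pointwise estimate $|L''| \leq C_3/s$. I establish this by rerunning the proof of Lemma \ref{lem2} with $(\Phi_s)''$ in place of $(\Phi_s)'$: a dyadic decomposition, the doubling property of $\omega$, and the estimate $\max_{|y|<\sqrt s}|(\Phi_s)''(y)| = O(s^{-3/2})$ with Gaussian tails yield $|u''|\leq C_2 u/s$, whence $|L''| \leq |u''/u|+(L')^2 \leq C_3/s$. The $x$-boundary integral is then $\lesssim \int_0^{t^2} s\cdot s^{-1/2}\cdot s^{-1} ds = 2t$.

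Summing gives $\iint_R s(L'')^2 dxds \leq Ct$, and since $(u''/u)^2 = (L''+(L')^2)^2 \leq 2(L'')^2 + 2(L')^4$,
$$\iint_R s\Bigl(\frac{u''}{u}\Bigr)^2 dxds \;\leq\; 2\iint_R s(L'')^2 dxds + 2\iint_R s(L')^4 dxds \;\leq\; C't,$$
where the $(L')^4$ integral is bounded via $s(L')^4 \leq C_1^2(L')^2$ (from Lemma \ref{lem2}) followed by Theorem \ref{thm3}. Dividing by $t$ yields the uniform bound. The main obstacle is the auxiliary estimate $|u''/u| \leq C_2/s$, which requires repeating the Lemma \ref{lem2} scheme carefully for the second derivative of the heat kernel; a minor technical point is that $\lim_{s\to 0^+} s(L'(x,s))^2$ may fail to exist pointwise, but it is bounded by $C_1^2$ uniformly, so approximating by $\int_\epsilon^{t^2}$ and letting $\epsilon\to 0^+$ produces a uniform estimate for the $s$-boundary piece.
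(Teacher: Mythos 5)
Your proof is correct but takes a genuinely different route from the paper's. The paper's proof factors $\Phi'=\eta\ast\Phi_{1/2}$ via the Fourier transform, writes $u''=\frac{1}{\sqrt t}\,\omega\ast\eta_t\ast(\Phi_{t/2})'$, applies Cauchy--Schwarz in the $\eta_t$ convolution, and then runs a dyadic-annulus argument with the doubling property so that the Carleson box for $u''$ is controlled by finitely (respectively, geometrically summably) many larger Carleson boxes for $u'$, each bounded by Theorem~\ref{thm3}. In contrast, you push the identity-and-divergence scheme of Theorem~\ref{thm3} one step further: apply $H$ to the test function $s(L')^2$ with $L=\log u$, obtaining the Bochner/Caccioppoli-type identity $H(s(L')^2)=(L')^2+s(L')^2L''-\tfrac{s}{2}(L'')^2$, integrate over the parabolic box, and peel off the good term $\tfrac{1}{2}\iint s(L'')^2$. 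The boundary terms are handled by the pointwise bounds $|L'|\leq C_1/\sqrt s$ (Lemma~\ref{lem2}) and, new to your argument, $|L''|\leq C_3/s$, which you correctly obtain by rerunning Lemma~\ref{lem2}'s dyadic scheme with $(\Phi_s)''$ in place of $(\Phi_s)'$. The bookkeeping checks out: the interior terms $(L')^2$ and $s(L')^4\leq C_1^2(L')^2$ are bounded by Theorem~\ref{thm3}; the middle term is an exact $x$-derivative $\partial_x((L')^3/3)$ and the resulting $\int_0^{t^2}s^{-1/2}ds$ converges; the $s$-boundary terms are $\leq 2tC_1^2$; the $x$-boundary term needs $|L''|\leq C_3/s$ and again yields $\int_0^{t^2}s^{-1/2}ds$; and your $\epsilon\to0^+$ regularization is the right way to justify the rearrangement of sign-indefinite integrands into absolutely convergent pieces. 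What your route buys is self-containment within the energy-method framework already used for Theorem~\ref{thm3}, at the cost of an auxiliary second-derivative Harnack-type estimate; what the paper's route buys is that the $u''$ estimate is reduced entirely to the first-derivative Theorem~\ref{thm3} after a one-time Fourier computation, with no new pointwise bounds needed.
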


\begin{proof}
First, we will find an appropriate function $\eta$ on $\mathbb R$ that satisfies $\Phi'=\eta \ast \Phi_{\frac{1}{2}}$.
We use the Fourier transformation
$$
\mathcal F(h)(\xi)=\frac{1}{\sqrt{2\pi}}\int_{-\infty}^{\infty}h(x)e^{-i\xi x}dx
$$
of a function $h$ on $\mathbb R$. Then, the desired function $\eta$ should satisfy that
$$
i\xi\mathcal F(\Phi)(\xi)= \mathcal F(\Phi')(\xi)=\sqrt{2\pi}\mathcal F(\eta)(\xi) \cdot \mathcal F(\Phi_{\frac{1}{2}})(\xi),
$$
and hence
$$
\mathcal F(\eta)(\xi)=\frac{i}{\sqrt{2\pi}}\xi\, \frac{\mathcal F(\Phi)(\xi)}{\mathcal F(\Phi_{\frac{1}{2}})(\xi)}
=\frac{i}{\sqrt{2\pi}}\xi\, \frac{\frac{1}{\sqrt{2\pi}}e^{-\frac{\xi^2}{4}}}{\frac{1}{\sqrt{2\pi}}e^{-\frac{\xi^2}{8}}}
=\frac{i}{\sqrt{2\pi}}\xi\,e^{-\frac{\xi^2}{8}}.
$$
Therefore, by the inverse Fourier transformation
$$
\mathcal F^{-1}(\hat h)(x)=\frac{1}{\sqrt{2\pi}}\int_{-\infty}^{\infty}\hat h(\xi)e^{i\xi x}d\xi,
$$
we have that
$$
\eta(x)=\mathcal F^{-1}\left(\frac{i}{\sqrt{2\pi}}\xi\,e^{-\frac{\xi^2}{8}}\right)(x)=-\frac{4\sqrt{2}}{\sqrt{\pi}} x e^{-2x^2}.
$$

We represent $u''(x,t)=(\omega \ast (\Phi_t)'')(x)$ by using $\eta_t(x)=\frac{1}{\sqrt{t}}\eta(\frac{x}{\sqrt{t}})$.
We note that
$$
(\Phi_t)'(x)=\frac{1}{\sqrt{t}}(\Phi')_t(x)=\frac{1}{\sqrt{t}}(\eta \ast \Phi_{\frac{1}{2}})_t(x)
=\frac{1}{\sqrt{t}}(\eta_t \ast \Phi_{\frac{t}{2}})(x).
$$
Hence,
$$
u''(x,t)=(\omega \ast (\Phi_t)')'(x)=\frac{1}{\sqrt{t}}(\omega \ast \eta_t \ast (\Phi_{\frac{t}{2}})')(x).
$$
Using this, we obtain that
\begin{align*}
u''(x,t)^2 &=\frac{1}{t}\left(\int_{-\infty}^{\infty} \eta_t(x-y) (\omega \ast (\Phi_{\frac{t}{2}})')(y) dy \right)^2\\
& \leq \frac{1}{t}\left(\int_{-\infty}^{\infty} |\eta_t(x-y)|dy \right)
\left(\int_{-\infty}^{\infty} |\eta_t(x-y)|(\omega \ast (\Phi_{\frac{t}{2}})')(y)^2 dy \right)\\
& = \frac{2}{t}\int_{-\infty}^{\infty} |\eta_t(x-y)|u'(y,t/2)^2 dy,
\end{align*}
where we used $\int |\eta_t(y)|dy=\int |\eta(y)|dy=2\sqrt{2}/\sqrt{\pi}<2$ in the last equation.

To dominate the integrand on question, we use an inequality $u(x,t/2)^2 \leq D_1 u(x,t)^2$ for some constant $D_1>0$,
which is obtained by Lemma \ref{lem1}.
Then,
$$
\frac{u''(x,t)^2}{u(x,t)^2}=\frac{u(x,t/2)^2}{u(x,t)^2}\frac{u''(x,t)^2}{u(x,t/2)^2}
\leq D_1 \frac{u''(x,t)^2}{u(x,t/2)^2}.
$$
Therefore,
\begin{align*}
&\quad \frac{1}{t} \int_0^{t^2} \!\! \int_{|x-x_0|<t} s\,\frac{u''(x,s)^2}{u(x,s)^2} dx ds\\
&\leq \frac{2D_1}{t} \int_0^{t^2} \!\! \int_{|x-x_0|<t}
\left(\int_{-\infty}^{\infty} |\eta_s(x-y)|u'(y,s/2)^2 dy\right) \frac{1}{u(x,s/2)^2} dx ds\\
&=\frac{4D_1}{t} \int_0^{\frac{t^2}{2}} \left(
\iint_{\begin{subarray}{c}|x-x_0|<t \\ -\infty<y<\infty \end{subarray}}|\eta_{2s}(x-y)|\frac{u(y,s)^2}{u(x,s)^2} \frac{u'(y,s)^2}{u(y,s)^2} dxdy \right) ds\\
&=\frac{4D_1}{t} \int_0^{\frac{t^2}{2}} \left( 
\iint_{\begin{subarray}{c}|x-x_0|<t \\ |x-y|<\sqrt{s} \end{subarray}}|\eta_{2s}(x-y)|\frac{u(y,s)^2}{u(x,s)^2} \frac{u'(y,s)^2}{u(y,s)^2} dxdy \right) ds\\
&+\frac{4D_1}{t} \int_0^{\frac{t^2}{2}} \sum_{k=1}^\infty 
\left(\iint_{\begin{subarray}{c}|x-x_0|<t \\ 2^{k-1}\sqrt{s} \leq |x-y|<2^k \sqrt{s} \end{subarray}}
|\eta_{2s}(x-y)|\frac{u(y,s)^2}{u(x,s)^2} \frac{u'(y,s)^2}{u(y,s)^2} dxdy\right) ds. \tag{2}
\end{align*}

We estimate the first term of the last line $(2)$.
If $|x-y|<\sqrt{s}$, then by Lemma \ref{lem1} and the doubling property of $\omega$ with the constant $\rho$, we have
$$
\frac{u(y,s)^2}{u(x,s)^2} \leq D_2 \left(\frac{\int_{|y-z|<\sqrt{s}}\omega(z)dz}{\int_{|x-z|<\sqrt{s}}\omega(z)dz}\right)^2 \leq D_2 \rho^2
$$
for some constant $D_2>0$. Moreover, if $|x-y|<\sqrt{s} \leq t/\sqrt{2}$ and $|x-x_0|<t$, then
$|y-x_0|<2t$. Hence, the integrand $I_0(s)$ by $ds$ is estimated as
\begin{align*}
I_0(s)&= 
\iint_{\begin{subarray}{c}|x-x_0|<t \\ |x-y|<\sqrt{s} \end{subarray}}|\eta_{2s}(x-y)|\frac{u(y,s)^2}{u(x,s)^2} \frac{u'(y,s)^2}{u(y,s)^2} dxdy\\
& \leq \frac{(\max |\eta|)D_2 \rho^2}{\sqrt{2s}}
\iint_{\begin{subarray}{c}|y-x_0|<2t \\ |x-y|<\sqrt{s} \end{subarray}} \frac{u'(y,s)^2}{u(y,s)^2} dxdy\\
& \leq D_2 \rho^2
\int_{|y-x_0|<2t} \frac{u'(y,s)^2}{u(y,s)^2} dy,
\end{align*}
where we used a fact that $\max_{x \in \mathbb R} |\eta(x)|$ is $2\sqrt{2}e^{-\frac{1}{2}}/\sqrt{\pi}<1$ 
attained at $x=\pm 1/2$ in the last inequality.
Then, by letting the uniform bound in Theorem \ref{thm3} $C_0>0$, this theorem shows that
\begin{align*}
\frac{4D_1}{t} \int_0^{\frac{t^2}{2}} I_0(s) ds
&\leq \frac{8D_1D_2 \rho^2}{2t} \int_0^{(2t)^2}\!\!\int_{|y-x_0|<2t} \frac{u'(y,s)^2}{u(y,s)^2} dyds \leq 8C_0D_1D_2 \rho^2.
\end{align*}

Next, we consider the second term of $(2)$.
If $|x-y|<2^k \sqrt{s}$ for $k \geq 1$, then
$$
\frac{u(y,s)^2}{u(x,s)^2} \leq D_2 \rho^{2(k+1)}.
$$
Moreover, if $|x-y|<2^k\sqrt{s} \leq 2^kt/\sqrt{2}$ and $|x-x_0|<t$, then
$|y-x_0|<2^{k+1}t$.
Furthermore,
if $2^{k-1} \sqrt{s} \leq |x-y|$ in addition, then
$$
|\eta_{2s}(x-y)|=\frac{2\sqrt{2}}{\sqrt{\pi}s}|x-y|e^{-\frac{2|x-y|^2}{2s}} < \frac{2^{k+1}}{\sqrt{s}}e^{-4^{k-1}}.
$$

Hence, the integrand $I_k(s)$ $(k \geq 1)$ by $ds$ is estimated as
\begin{align*}
I_k(s)&=
\iint_{\begin{subarray}{c}|x-x_0|<t \\ 2^{k-1}\sqrt{s} \leq |x-y|<2^k \sqrt{s} \end{subarray}}
|\eta_{2s}(x-y)|\frac{u(y,s)^2}{u(x,s)^2} \frac{u'(y,s)^2}{u(y,s)^2} dxdy\\
& \leq D_2 \rho^{2(k+1)} \frac{2^{k+1}}{\sqrt{s}}e^{-4^{k-1}}
\iint_{\begin{subarray}{c}|y-x_0|<2^{k+1}t \\ |x-y|<2^k \sqrt{s} \end{subarray}}
 \frac{u'(y,s)^2}{u(y,s)^2} dxdy\\
 & \leq D_2 (2\rho)^{2(k+1)} e^{-4^{k-1}}
\int_{|y-x_0|<2^{k+1}t}
 \frac{u'(y,s)^2}{u(y,s)^2} dy.
\end{align*}
Then, we have
\begin{align*}
&\quad\frac{4D_1}{t} \int_0^{\frac{t^2}{2}} I_k(s) ds\\
&\leq 4D_1D_2(2\rho)^{2(k+1)} e^{-4^{k-1}} \frac{1}{t}\int_0^{\frac{t^2}{2}}\!\! \int_{|y-x_0|<2^{k+1}t}
 \frac{u'(y,s)^2}{u(y,s)^2} dy ds\\
&\leq D_1D_2(4\rho)^{2(k+1)} e^{-4^{k-1}} \left(\frac{1}{2^{k+1}t}\int_0^{(2^{k+1}t)^2}\!\! \int_{|y-x_0|<2^{k+1}t}
 \frac{u'(y,s)^2}{u(y,s)^2} dy ds\right)\\
&\leq C_0D_1D_2(4\rho)^{2(k+1)} e^{-4^{k-1}},
\end{align*}
where the last inequality is also due to Theorem \ref{thm3}.

Combining these two estimates for $(2)$, we obtain that
$$
\frac{1}{t} \int_0^{t^2} \!\! \int_{|x-x_0|<t} s\,\frac{u''(x,s)^2}{u(x,s)^2} dx ds
\leq C_0D_1D_2 \sum_{k=0}^\infty (4\rho)^{2(k+1)} e^{-4^{k-1}}<\infty.
$$
Thus, we complete the proof of the theorem.
\end{proof}

\medskip
\section{The heat kernel variant of the Beurling--Ahlfors extension}

This section is an exposition of a part of Section 4 of 
Fefferman, Kenig and Pipher \cite{FKP}. 

Let $\phi(x)=\frac{1}{\sqrt \pi}e^{-x^2}$ and $\psi(x)=\phi'(x)=-2x \phi(x)$.
For any $t>0$, we set $\phi_t(x)=\frac{1}{t}\phi(\frac{x}{t})$ and $\psi_t(x)=\frac{1}{t}\psi(\frac{x}{t})$.
For a doubling weight $\omega$ on $\mathbb R$, we define a quasisymmetric homeomorphism 
$f:\mathbb R \to \mathbb R$ by $f(x)=\int^x_0 \omega(y)dy$. Then, we extend $f$ to the upper half-plane 
$\mathbb U=\{(x,t) \mid t>0\}$ by setting a differentiable map $F(x,t)=(U,V)$ for $U(x,t)=(f \ast \phi_t)(x)$ and
$V(x,t)=(f \ast \psi_t)(x)$.

In Section 2, we consider the heat kernel $\Phi_t(x)=\frac{1}{\sqrt{\pi t}}\, e^{-\frac{x^2}{t}}$ and the solution
$u(x,t)=(\omega \ast \Phi_t)(x)$ of the heat equation with the initial state $\omega$. Then, the partial derivatives of
$U$ and $V$ are represented as follows:
\begin{align*}
U_x&=\frac{\partial U}{\partial x}=(\omega \ast \phi_t)(x)=(\omega \ast \Phi_{t^2})(x)=u(x,t^2);\\
V_x&=\frac{\partial V}{\partial x}=(\omega \ast \psi_t)(x)=t (\omega \ast (\Phi_{t^2})')(x)=tu'(x,t^2);\\
U_t&=\frac{\partial U}{\partial t}=(f \ast \frac{\partial \phi_t}{\partial t})(x)
=\frac{1}{2}(\omega \ast \psi_t)(x)=\frac{1}{2}V_x;\\
V_t&=\frac{\partial V}{\partial t}=(f \ast \frac{\partial \psi_t}{\partial t})(x)
=U_x+\frac{t^2}{2}(\omega \ast (\phi_t)'')(x)
=2(\omega \ast \widetilde \phi_t)(x)
\end{align*}
for $\widetilde \phi_t(x)=\frac{x^2}{t^2}\phi_t(x)$.
We note here that
\begin{align*}
\frac{\partial \phi_t}{\partial t}(x)&=\frac{t}{2}\frac{\partial^2 \phi_t}{\partial x^2} =\frac{1}{2}(\psi_t)'(x); \\ 
\frac{\partial \psi_t}{\partial t}(x)&=\frac{\partial \phi_t}{\partial x}+\frac{t^2}{2} \frac{\partial^3 \phi_t}{\partial x^3}
=(\phi_t)'+\frac{t^2}{2}(\phi_t)'''
=2 (\widetilde \phi_t)'(x).
\end{align*}

\begin{proposition}\label{cor}
$|V_x(x,t)|=2|U_t(x,t)| \leq C_1 U_x(x,t)$ whereas $|V_t(x,t)| \asymp U_x(x,t)$.
\end{proposition}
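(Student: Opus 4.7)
My plan is to dispatch the two claims separately, using the formulas for the partial derivatives computed just before the statement together with Lemmas~\ref{lem1} and \ref{lem2}; the ${\rm A}_\infty$ hypothesis on $\omega$ (in force throughout this exposition) enters only for the lower bound on $V_t$.

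First, the equality $|V_x|=2|U_t|$ is immediate from the identity $U_t=\tfrac{1}{2}V_x$ already recorded in the excerpt. For the inequality $|V_x|\le C_1 U_x$, I substitute the representations $V_x=tu'(x,t^2)$ and $U_x=u(x,t^2)$ and apply Lemma~\ref{lem2} at $s=t^2$: it yields $|u'(x,t^2)|\le (C_1/t)u(x,t^2)$, and multiplying through by $t$ gives the claim.

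For the upper half of $|V_t|\asymp U_x$, I begin from $V_t=2(\omega\ast\widetilde\phi_t)(x)$ with $\widetilde\phi_t(z)=(z^2/t^2)\phi_t(z)$ and repeat the dyadic decomposition used in the proof of Lemma~\ref{lem1}, splitting the convolution into $\{|x-y|<t\}$ and the annuli $\{2^{n-1}t\le|x-y|<2^n t\}$. The pointwise bounds $\widetilde\phi_t(z)\le(et\sqrt{\pi})^{-1}$ (since $r^2e^{-r^2}\le 1/e$) and $\widetilde\phi_t(z)\le 4^n e^{-4^{n-1}}/(t\sqrt{\pi})$ on the $n$-th annulus, together with the doubling estimate $\int_{|x-y|<2^n t}\omega\le\rho^n\int_{|x-y|<t}\omega$, produce a convergent geometric--exponential sum and deliver $V_t(x,t)\le (C/t)\int_{|x-y|<t}\omega(y)\,dy$, which Lemma~\ref{lem1} then compares with $U_x$.

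The lower bound is the step I expect to be the main obstacle, for good reason: a merely doubling $\omega$ concentrated near $x$ at a scale $\ll t$ can force $V_t/U_x$ to be arbitrarily small, so some ${\rm A}_\infty$-strength is essential. To handle it, I invoke the standard ${\rm A}_\infty$ decay estimate $\omega(E)/\omega(B)\le C_0(|E|/|B|)^\delta$, valid for every bounded interval $B$ and measurable $E\subset B$. Choosing $\varepsilon\in(0,1)$ with $C_0\varepsilon^\delta\le 1/2$ gives $\omega(B_{\varepsilon t}(x))\le \tfrac{1}{2}\omega(B_t(x))$, so the mass of $\omega$ on the annulus $\{\varepsilon t\le|x-y|<t\}$ is at least $\tfrac{1}{2}\omega(B_t(x))$. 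On that annulus one has $\widetilde\phi_t(x-y)\ge \varepsilon^2 e^{-1}/(t\sqrt{\pi})$, so restricting the convolution to it yields $V_t(x,t)\ge (c/t)\,\omega(B_t(x))\ge c' U_x(x,t)$ by Lemma~\ref{lem1}, completing the two-sided comparison $|V_t|\asymp U_x$.
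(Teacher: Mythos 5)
The first claim and the upper bound $|V_t|\leq CU_x$ are correct and follow the paper's route: the equality comes from the identity $U_t=\tfrac12 V_x$, the inequality from Lemma~\ref{lem2}, and the upper bound via the dyadic annulus decomposition plus doubling, as in Lemma~\ref{lem1}.

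The lower bound $V_t\geq cU_x$ is where you go astray, though not because your argument is internally invalid. Proposition~\ref{cor}, Lemma~\ref{doubling}, and Theorem~\ref{variant} are all stated under the hypothesis that $\omega$ is merely a \emph{doubling} weight; the ${\rm A}_\infty$ assumption does not enter the paper until Theorem~\ref{carleson}. Your assertion that ``some ${\rm A}_\infty$-strength is essential'' because a doubling weight could pile up near $x$ at scale $\ll t$ is false, and the paper's Lemma~\ref{doubling} shows precisely why: one should apply the doubling inequality to intervals \emph{not} centered at $x$. Taking $I=[x+t,\,x+3t]$, which has center $x+2t$, gives $2I=[x,\,x+4t]$, hence $\int_x^{x+4t}\omega\leq\rho\int_{x+t}^{x+3t}\omega$; adding the mirror inequality for $I'=[x-3t,\,x-t]$ yields
\[
\int_{|x-y|<4t}\omega(y)\,dy\;\leq\;\rho\int_{t\leq|x-y|<3t}\omega(y)\,dy.
\]
Thus doubling alone forces a definite fraction of the mass of $\{|x-y|<4t\}$ (and a fortiori of $\{|x-y|<t\}$) to sit in the outer annulus $\{t\leq|x-y|<3t\}$, on which $|x-y|^2\phi_t(x-y)\geq t^2\cdot\tfrac{1}{t\sqrt\pi}e^{-9}$; combining with Lemma~\ref{lem1} gives $V_t\geq cU_x$ with no appeal to ${\rm A}_\infty$. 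Your ${\rm A}_\infty$ decay estimate $\omega(E)/\omega(B)\leq C_0(|E|/|B|)^\delta$, used to locate mass in an annulus $\{\varepsilon t\leq|x-y|<t\}$ concentric with $B_t(x)$, is a strictly stronger tool and is unavailable here; relying on it would leave Theorem~\ref{variant} unproved at the level of generality the paper claims.
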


\begin{proof}
Lemma \ref{lem2} implies that 
$$
\frac{|V_x(x,t)|}{U_x(x,t)}=\frac{2|U_t(x,t)|}{U_x(x,t)}=\frac{t |u'(x,t^2)|}{u(x,t^2)} \leq C_1.
$$
Since $V_t(x,t)=2(\omega \ast \widetilde \phi_t)(x)$ and $U_x(x,t)=(\omega \ast \phi_t)(x)$, the latter statement follows from the next lemma.
\end{proof}

\begin{lemma}\label{doubling}
For a doubling weight $\omega$ on $\mathbb R$, we have
$$
\int_{\mathbb R} \omega(y) \phi_t(x-y)dy \asymp 
\frac{1}{t}\int_{\mathbb R} \omega(y) |x-y| \phi_t(x-y)dy \asymp \frac{1}{t^2}\int_{\mathbb R} \omega(y) |x-y|^2 \phi_t(x-y)dy
$$
uniformly for any $x \in \mathbb R$ and $t>0$.
\end{lemma}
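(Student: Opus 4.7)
My plan is to compare all three quantities to the common yardstick $M/t := \frac{1}{t}\int_{|x-y|<t}\omega(y)dy$. Since $\phi_t(z) = \frac{1}{t\sqrt{\pi}}e^{-(z/t)^2} = \Phi_{t^2}(z)$ in the notation of Section \ref{sec1}, applying Lemma \ref{lem1} with $t$ replaced by $t^2$ immediately gives $\int \omega(y)\phi_t(x-y)dy = u(x,t^2) \asymp M/t$, so it suffices to establish the same comparison for the two weighted integrals.

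For the upper bounds I would follow the strategy of Lemma \ref{lem1} and decompose $\mathbb R$ into $I_0 = \{|x-y|<t\}$ and the dyadic annuli $I_n = \{2^{n-1}t \leq |x-y|<2^n t\}$ for $n\geq 1$. On $I_n$ one has the pointwise bounds $|x-y|/t \leq 2^n$, $(|x-y|/t)^2 \leq 4^n$, together with $\phi_t(x-y) \leq \phi_t(2^{n-1}t) = e^{-4^{n-1}}/(t\sqrt{\pi})$; combined with the doubling estimate $\int_{|x-y|<2^n t}\omega \leq \rho^n M$, summing over $n$ produces rapidly convergent series with general terms $(2\rho)^n e^{-4^{n-1}}$ (for the $|x-y|$-weighted integral) and $(4\rho)^n e^{-4^{n-1}}$ (for the $|x-y|^2$-weighted integral), yielding the desired upper bounds.

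For the lower bounds, estimating the integrand from below on $I_0$ is useless since $|x-y|$ vanishes at $y=x$. The key observation is that doubling can be used to furnish a \emph{lower} bound on the $\omega$-mass of an off-centre strip, provided its dilations cover back $I_0$. Take $J = [x+t,\,x+2t]$, which has centre $x+3t/2$ and radius $t/2$; then $8J = [x-5t/2,\,x+11t/2] \supset I_0$, and three successive applications of the doubling property give $M \leq \int_{8J}\omega \leq \rho^3 \int_J \omega(y)dy$. On $J$ we have $|x-y|\geq t$, $|x-y|^2 \geq t^2$, and $\phi_t(x-y)\geq \phi_t(2t) = e^{-4}/(t\sqrt{\pi})$, so restricting the middle integral to $J$ yields $\frac{1}{t}\int \omega |x-y|\phi_t \geq \frac{e^{-4}}{\rho^3 \sqrt{\pi}} \cdot M/t$, and the analogous restriction of the rightmost integral (using $|x-y|^2 \geq t^2$ on $J$) gives a lower bound of the same form.

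The main obstacle is precisely the lower bound: the doubling property is naturally an upper estimate on dilated masses, and the reverse estimate is not automatic. Once the off-centre strip $J$ whose triple doubling swallows $I_0$ is identified, both lower bounds follow, with all constants depending only on the doubling constant of $\omega$, uniformly in $x \in \mathbb R$ and $t>0$.
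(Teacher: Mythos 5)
Your proposal is correct and follows essentially the same strategy as the paper: dyadic decomposition plus the doubling property for the upper bounds (as in Lemmas \ref{lem1}--\ref{lem2}), and restriction to a region bounded away from $y=x$ followed by doubling for the lower bound. The only cosmetic differences are that you compare all three quantities to the common yardstick $\frac{1}{t}\int_{|x-y|<t}\omega$ whereas the paper closes a cyclic chain of one-sided inequalities, and your off-centre region is the one-sided interval $[x+t,x+2t]$ whereas the paper uses the symmetric annulus $\{t\le|x-y|<3t\}$.
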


\begin{proof}
An inequality
$$
C \int_{\mathbb R} \omega(y) \phi_t(x-y)dy \geq \frac{1}{t} \int_{\mathbb R} \omega(y) |x-y| \phi_t(x-y)dy
$$
for some $C>0$ is essentially given in Lemma \ref{lem2}. By a similar argument, we can also show that
$$
\frac{C}{t} \int_{\mathbb R} \omega(y) |x-y| \phi_t(x-y)dy \geq \frac{1}{t^2} \int_{\mathbb R} \omega(y) |x-y|^2 \phi_t(x-y)dy
$$
for some $C>0$ possibly different. Hence, we have only to prove that
$$
\frac{C}{t^2} \int_{\mathbb R} \omega(y) |x-y|^2 \phi_t(x-y)dy \geq \int_{\mathbb R} \omega(y) \phi_t(x-y)dy.
$$

Trivial estimates show that
\begin{align*}
\int_{\mathbb R} \omega(y) |x-y|^2 \phi_t(x-y)dy &\geq \int_{t \leq |x-y| < 3t} \omega(y) |x-y|^2 \phi_t(x-y)dy\\
& \geq \frac{9t}{e^9} \int_{t \leq |x-y| < 3t} \omega(y) dy.
\end{align*}
By using the doubling constant $\rho>1$ for $\omega$, we have
$$
\rho \int_{t \leq |x-y| < 3t} \omega(y) dy \geq \int_{|x-y| < 4t} \omega(y) dy \geq \int_{|x-y| < t} \omega(y) dy.
$$
Finally, Lemma \ref{lem1} gives
$$
\frac{1}{t} \int_{|x-y| \leq t} \omega(y) dy \geq c \int_{\mathbb R} \omega(y) \phi_t(x-y)dy.
$$
The combination of these three estimates proves the required inequality.
\end{proof}

The heat kernel variant of the Beurling--Ahlfors extension can be stated as follows.
If we start with a given quasisymmetric homeomorphism $f$ of $\mathbb R$, the only requirement for $f$
in this theorem is that $f$ is locally absolutely continuous. This corresponds to \cite[Lemma 4.4]{FKP}

\begin{theorem}\label{variant}
For a doubling weight $\omega$ on $\mathbb R$, the differentiable map $F:\mathbb U \to \mathbb U$ is a
quasiconformal homeomorphism that extends continuously to the quasisymmetric homeomorphism $f$ of $\mathbb R$.
\end{theorem}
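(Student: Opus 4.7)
The plan is to establish four claims in turn: (i) $F(\mathbb U)\subset \mathbb U$; (ii) the Jacobian $J_F = U_xV_t - U_tV_x$ admits a uniform lower bound $J_F\gtrsim U_x^2$ depending only on the doubling constant $\rho$ of $\omega$; (iii) the complex dilatation satisfies $|\mu_F|\leq k<1$ uniformly; and (iv) $F$ extends continuously to $f$ on $\mathbb R$ and is a homeomorphism of $\mathbb U$ onto itself.

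For (i), since $\psi_t(y)=-\tfrac{2y}{t}\phi_t(y)$ is odd, folding the convolution gives
$$V(x,t)=\frac{2}{t}\int_0^{\infty}y\bigl[f(x+y)-f(x-y)\bigr]\phi_t(y)\,dy>0,$$
because $f$ is strictly increasing.

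The heart of the argument is (ii). Using the relation $U_t=\tfrac12 V_x$, one has $J_F=U_xV_t-\tfrac12V_x^2$; substituting the explicit convolution expressions for $U_x$, $V_x$, $V_t$ and symmetrizing the double integrals in $(y,y')$, the cross term completes into a perfect square:
$$J_F=\frac{1}{t^2}\iint \omega(y)\omega(y')(y-y')^2\,\phi_t(x-y)\phi_t(x-y')\,dy\,dy'.$$
This makes pointwise positivity transparent (the identity is essentially the Cauchy--Schwarz slack in $V_x^2\leq 2U_xV_t$). For the quantitative lower bound I restrict to $(y,y')\in E_1\times E_2$ with $E_1=[x-\alpha t,x+\alpha t]$ for a small $\alpha$ and $E_2=[x+2t,x+3t]$, on which $|y-y'|\geq(2-\alpha)t$ and $\phi_t(x-y)\phi_t(x-y')\gtrsim t^{-2}$. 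By Lemma \ref{lem1} together with reverse doubling (a standard consequence of doubling for non-atomic measures on $\mathbb R$), both $\int_{E_1}\omega$ and $\int_{E_2}\omega$ are $\gtrsim \int_{|y-x|<t}\omega\asymp tU_x$, yielding $J_F\gtrsim U_x^2$ uniformly.

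For (iii), Proposition \ref{cor} gives $|V_x|\leq C_1U_x$ and $V_t\asymp U_x$, and combined with $U_t=\tfrac12 V_x$ these imply $|\partial F|^2+|\bar\partial F|^2\leq M U_x^2$. Since $|\partial F|^2-|\bar\partial F|^2=J_F\gtrsim U_x^2$, we obtain $(1-|\mu_F|^2)/(1+|\mu_F|^2)\geq c/M$, whence $|\mu_F|\leq k<1$ uniformly. For (iv), standard approximate-identity arguments give $U(x,t)\to f(x)$ and $V(x,t)\to 0$ as $t\to 0^+$ (for $V$ using $\int\psi_t=0$ together with the Gaussian decay of $\psi_t$ and the polynomial growth of $f$ that follows from doubling). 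Since $F$ is then smooth with positive Jacobian, maps $\mathbb U$ into $\mathbb U$, and extends continuously to the homeomorphism $f$ on $\partial\mathbb U$, a routine topological argument (a quasiregular local homeomorphism with homeomorphic boundary extension is a global homeomorphism) completes the proof.

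The principal obstacle is step (ii): the symmetrized double-integral representation makes pointwise positivity immediate, but extracting a uniform constant depending only on $\rho$ requires invoking reverse doubling to prevent the measure $\omega(y)\phi_t(x-y)\,dy$ from concentrating on a set of scale smaller than $t$; once this quantitative Jacobian bound is in place, all remaining steps are routine syntheses of Proposition \ref{cor} and Lemma \ref{lem1} with classical approximate-identity and covering-map arguments.
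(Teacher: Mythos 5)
Your proof is correct, and for the crucial quantitative Jacobian bound it takes a genuinely different route from the paper. The paper bounds $J_F=U_xV_t-U_tV_x$ from below by first applying Cauchy--Schwarz to obtain $U_xV_t\geq\tfrac{2}{t^2}\bigl(\int\omega(y)|x-y|\phi_t(x-y)\,dy\bigr)^2$, and then showing that the ratio $\bigl(\int\omega(y)(x-y)\phi_t\bigr)^2/\bigl(\int\omega(y)|x-y|\phi_t\bigr)^2$ stays away from $1$ by comparing the one-sided moment integrals $I_1=\int_{x-y\geq0}\omega(y)(x-y)\phi_t(x-y)\,dy$ and $I_2=\int_{x-y\leq0}\omega(y)|x-y|\phi_t(x-y)\,dy$, which are comparable by the doubling property. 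Your proof instead symmetrizes the double integral for $J_F=U_xV_t-\tfrac12V_x^2$ once and for all into
$$J_F=\frac{1}{t^2}\iint \omega(y)\,\omega(y')\,(y-y')^2\,\phi_t(x-y)\,\phi_t(x-y')\,dy\,dy',$$
which I verified is a correct Lagrange-type identity (the cross term $(x-y')^2-(x-y)(x-y')$ symmetrizes to $\tfrac12(y-y')^2$). This makes strict positivity visible at a glance, and the quantitative bound then comes from restricting the integral to a rectangle $E_1\times E_2$ of separated intervals on which $\phi_t(x-y)\phi_t(x-y')\gtrsim t^{-2}$ and invoking doubling together with Lemma~\ref{lem1}. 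This is arguably cleaner than the $I_1$--$I_2$ comparison because it bypasses the delicate ratio argument. Steps (i), (iii), (iv) — $V>0$ by folding, the uniform bound on $K_F$ via Proposition~\ref{cor}, the boundary continuity, and the covering-map argument — agree in substance with the paper.

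One minor terminological quibble: you invoke \emph{reverse doubling} to get $\int_{E_1}\omega\gtrsim\int_{|y-x|<t}\omega$ and $\int_{E_2}\omega\gtrsim\int_{|y-x|<t}\omega$. In fact ordinary (forward) doubling already suffices here: since $E_1\subset(x-t,x+t)\subset 2^k E_1$ for some fixed $k=k(\alpha)$, iterated doubling gives $\int_{(x-t,x+t)}\omega\leq\rho^k\int_{E_1}\omega$, and similarly three doublings of $E_2=[x+2t,x+3t]$ cover $(x-t,x+t)$. The stronger statement that is usually called reverse doubling (a guaranteed multiplicative growth $\mu(2I)\geq(1+\varepsilon)\mu(I)$) is not needed. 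This does not affect the correctness of the argument, but it is worth stating the input precisely.
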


\begin{proof}
For the complex dilatation $\mu_F=\bar \partial F/\partial F$, 
we consider
$$
K_F(x,t)=\frac{1+|\mu_F|^2}{1-|\mu_F|^2}=\frac{|\partial F|^2+|\bar \partial F|^2}{|\partial F|^2-|\bar \partial F|^2}
=\frac{U_x^2+U_t^2+V_x^2+V_t^2}{2(U_xV_t-U_tV_x)},
$$
and prove that this is uniformly bounded. Proposition \ref{cor} implies that
$$
U_x^2+U_t^2+V_x^2+V_t^2 \asymp U_x^2.
$$
The Cauchy--Schwarz inequality implies that
\begin{align*}
U_xV_t&=\frac{2}{t^2}\int_{\mathbb R} \omega(y) \phi_t(x-y)dy 
\int_{\mathbb R} \omega(y) (x-y)^2 \phi_t(x-y)dy\\
& \geq \frac{2}{t^2}\left(\int_{\mathbb R} \omega(y)|x-y| \phi_t(x-y)dy \right)^2.
\end{align*}
Then,
\begin{align*}
U_xV_t-U_tV_x \geq 
\frac{2}{t^2}\left(\int_{\mathbb R} \omega(y)|x-y| \phi_t(x-y)dy\right)^2
-\frac{2}{t^2}\left(\int_{\mathbb R} \omega(y)(x-y) \phi_t(x-y)dy\right)^2.
\end{align*}

We set
$$
I_1(x,t)=\int_{x-y \geq 0} \omega(y)(x-y)\phi_t(x-y)dy;\quad
I_2(x,t)=\int_{x-y \leq 0} \omega(y)|x-y|\phi_t(x-y)dy.
$$
Then, by similar arguments to those in Lemmas \ref{lem1}, \ref{lem2} and \ref{doubling} using the doubling property of $\omega$,
we have
\begin{align*}
I_1(x,t) &\asymp \int_{0 \leq x-y < t} \omega(y)(x-y)\phi_t(x-y)dy\\
&\asymp \int_{-t < x-y \leq 0} \omega(y)|x-y|\phi_t(x-y)dy \asymp I_2(x,t).
\end{align*}
We consider
$$
\frac{\left(\int_{\mathbb R} \omega(y)(x-y) \phi_t(x-y)dy\right)^2}
{\left(\int_{\mathbb R} \omega(y)|x-y| \phi_t(x-y)dy\right)^2}
=\frac{(I_1-I_2)^2}{(I_1+I_2)^2}
=\frac{(1-I_2/I_1)^2}{(1+I_2/I_1)^2}.
$$
If $I_2 \leq I_1$, then
the ratio $I_2/I_1 \leq 1$ is bounded away from $0$. 
Similarly, when $I_1 \leq I_2$, we consider $I_1/I_2 \leq 1$ instead. 
Hence, there is some constant $\varepsilon >0$ such that
$$
\left(\int_{\mathbb R} \omega(y)(x-y) \phi_t(x-y)dy\right)^2 \leq
(1-\varepsilon)\left(\int_{\mathbb R} \omega(y)|x-y| \phi_t(x-y)dy\right)^2.
$$

The above inequality implies that
$$
U_xV_t-U_tV_x \geq \frac{2\varepsilon}{t^2}\left(\int_{\mathbb R} \omega(y)|x-y| \phi_t(x-y)dy\right)^2>0.
$$
Then, Lemma \ref{doubling} shows that the middle term of this inequality is greater than
$\varepsilon' U_x^2$ for some constant $\varepsilon'>0$. This concludes that $K_F(x,t)$ is uniformly bounded, and hence
$\Vert \mu_F \Vert_\infty<1$.

By the property of the heat kernel, we see that 
$U(x,t) \to f(x)$ and $V(x,t) \to 0$ as $t \to 0$. This shows that $F$ extends continuously to $f$ on $\mathbb R$.
Moreover, $F(x,t) \to \infty$ as $(x,t) \to \infty$. Since the Jacobian determinant 
$J_F=U_xV_t-U_tV_x$ is positive at every point as we have seen above, $F$ is a local homeomorphism.
Then, a topological argument deduces that $F$ is an orientation-preserving global diffeomorphism of $\mathbb U$ onto itself.
By $\Vert \mu_F \Vert_\infty<1$, we see that $F$ is quasiconformal.
\end{proof}

If we further assume that $\omega$ is an ${\rm A}_\infty$-weight, that is, 
$f$ is a strongly quasisymmetric homeomorphism, then we see that the complex dilatation $\mu_F$
induces a Carleson measure on $\mathbb U$. This corresponds to \cite[Theorem 4.2]{FKP}.

\begin{theorem}\label{carleson}
For an ${\rm A}_\infty$-weight $\omega$ on $\mathbb R$, the complex dilatation $\mu_F$ of
the quasiconformal diffeomorphism $F:\mathbb U \to \mathbb U$ satisfies that
$\frac{1}{t}|\mu_F(x,t)|^2 dxdt$ is a Carleson measure on $\mathbb U$.
\end{theorem}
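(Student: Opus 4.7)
My plan is to express $|\mu_F(x,t)|^2$ directly in terms of the heat-kernel quantities $u(\cdot,t^2)$, $u'(\cdot,t^2)$, $u''(\cdot,t^2)$, and then apply the uniform bounds provided by Theorems \ref{thm3} and \ref{thm5} after the natural change of variables $s=t^2$. The identities gathered in Section 3 make this rather mechanical once the correct combinations are written down.

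First I would identify the numerator. Since $F=U+iV$, one has
$$
|\bar\partial F|^2=\tfrac{1}{4}\bigl((U_x-V_t)^2+(V_x+U_t)^2\bigr),\qquad |\partial F|^2=\tfrac{1}{4}\bigl((U_x+V_t)^2+(V_x-U_t)^2\bigr).
$$
Using $U_t=\tfrac{1}{2}V_x$ from Section 3 we get $V_x+U_t=\tfrac{3}{2}V_x=\tfrac{3}{2}t\,u'(x,t^2)$. Using $V_t=U_x+\tfrac{t^2}{2}(\omega\ast(\phi_t)'')(x)=u(x,t^2)+\tfrac{t^2}{2}u''(x,t^2)$ we get $U_x-V_t=-\tfrac{t^2}{2}u''(x,t^2)$. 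For the denominator, the proof of Theorem \ref{variant} already shows $|\partial F|^2\asymp U_x^2\asymp u(x,t^2)^2$ (since $J_F\asymp U_x^2$ and $|\partial F|^2-|\bar\partial F|^2=J_F$ while $\|\mu_F\|_\infty<1$). Combining,
$$
\frac{1}{t}|\mu_F(x,t)|^2 \;\asymp\; \frac{t\,u'(x,t^2)^2 + t^3\,u''(x,t^2)^2}{u(x,t^2)^2}.
$$

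Next I would verify the Carleson condition on an arbitrary bounded interval $I\subset\mathbb{R}$ with center $x_0$. Using Fubini and the substitution $s=t^2$ (so $t\,dt=\tfrac{1}{2}ds$ and $t^3\,dt=\tfrac{s}{2}\,ds$), the scale-$|I|$ tent integral rewrites as
$$
\int_I\!\!\int_0^{|I|}\frac{|\mu_F(x,t)|^2}{t}\,dt\,dx \;\asymp\; \frac{1}{2}\int_I\!\!\int_0^{|I|^2}\!\left(\frac{u'(x,s)^2}{u(x,s)^2}+s\,\frac{u''(x,s)^2}{u(x,s)^2}\right)ds\,dx.
$$
Since $I\subset\{|x-x_0|<|I|\}$ and the range of $s$ is $(0,|I|^2)$, Theorem \ref{thm3} applied with parameter $t=|I|$ bounds the first summand by $C|I|$, while Theorem \ref{thm5} with the same parameter bounds the second summand by $C|I|$. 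Dividing by $|I|$ yields a uniform bound on the Carleson norm.

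The only real content lies in the first step: it is the clean algebraic identification $U_x-V_t=-\tfrac{t^2}{2}u''(x,t^2)$ and $V_x+U_t=\tfrac{3}{2}V_x$ that reduces the abstract dilatation $|\mu_F|^2$ to exactly the two integrands controlled by Theorems \ref{thm3} and \ref{thm5}. Everything after that is a one-line change of variables and the already-proven bound $|\partial F|^2\asymp u(x,t^2)^2$, so there is no further analytic difficulty.
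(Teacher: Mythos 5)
Your proof is correct and takes essentially the same approach as the paper: both reduce $|\mu_F|^2$ via the explicit formulas for $U_x$, $U_t$, $V_x$, $V_t$ to the two ratios $t^2 u'(x,t^2)^2/u(x,t^2)^2$ and $t^4 u''(x,t^2)^2/u(x,t^2)^2$, and then apply Theorems~\ref{thm3} and \ref{thm5} after the substitution $s=t^2$. Your algebraic route (writing $|\bar\partial F|^2$, $|\partial F|^2$ as sums of squares and using $J_F\asymp U_x^2$ together with $\Vert\mu_F\Vert_\infty<1$ for the denominator) is a cosmetic variant of the paper's one-line upper bound $|\mu_F|^2 \leq (2U_t^2+2V_x^2+(U_x-V_t)^2)/U_x^2$, with identical analytic content thereafter.
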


\begin{proof}
The complex dilatation $\mu_F=F_{\bar z}/F_z$ $(z=x+it)$ satisfies that
\begin{align*}
|\mu_F|^2=\frac{U_x^2+U_t^2+V_x^2+V_t^2-2J_F}{U_x^2+U_t^2+V_x^2+V_t^2+2J_F}
\leq \frac{2U_t^2+2V_x^2+(U_x-V_t)^2}{U_x^2},
\tag{3}
\end{align*}
where $J_F=U_xV_t-U_tV_x$ is the Jacobian determinant of $F$. Here,
$$
\frac{4U_t^2}{U_x^2}=\frac{V_x^2}{U_x^2}=t^2 \frac{u'(x,t^2)^2}{u(x,t^2)^2},
$$
and by the change of the variables $\sigma=s^2$, we have
\begin{align*}
\frac{1}{t} \int_0^{t} \!\! \int_{|x-x_0|<t} \left(s^2 \frac{u'(x,s^2)^2}{u(x,s^2)^2}\right)\frac{dx ds}{s}
= \frac{1}{2t} \int_0^{t^2} \!\! \int_{|x-x_0|<t} \frac{u'(x,\sigma)^2}{u(x,\sigma)^2}dx d\sigma.
\end{align*}
By Theorem \ref{thm3}, this is uniformly bounded.

Moreover, 
$$
\frac{(U_x-V_t)^2}{U_x^2}=t^4 \frac{u''(x,t^2)^2}{u(x,t^2)^2}
$$
and by the change of the variables again, we have
\begin{align*}
\frac{1}{t} \int_0^{t} \!\! \int_{|x-x_0|<t} \left(s^4 \frac{u''(x,s^2)^2}{u(x,s^2)^2}\right)\frac{dx ds}{s}
= \frac{1}{2t} \int_0^{t^2} \!\! \int_{|x-x_0|<t} \sigma\,\frac{u''(x,\sigma)^2}{u(x,\sigma)^2}dx d\sigma.
\end{align*}
By Theorem \ref{thm5}, this is also uniformly bounded.
Combining these two estimates, we see that
$$
\frac{1}{t} \int_0^{t} \!\! \int_{|x-x_0|<t} |\mu_F(x,s)|^2 \frac{dx ds}{s}
$$
is uniformly bounded, which shows that $\frac{1}{t}|\mu_F(x,t)|^2 dxdt$ is a Carleson measure on $\mathbb U$.
\end{proof}

\medskip
\section{The quasiconformal extension of strongly symmetric homeomorphisms and vanishing Carleson measures}

We assume that $\log \omega$ for an ${\rm A}_\infty$-weight $\omega$ is in ${\rm VMO}(\mathbb R)$, that is, 
$f(x)=\int^x_0 \omega(y)dy$ is a strongly symmetric homeomorphism. Then, we prove that the complex dilatation $\mu_F$
of the quasiconformal diffeomorphism $F:\mathbb U \to \mathbb U$ given in Theorems \ref{variant} and \ref{carleson}
induces a vanishing Carleson measure on the upper half-plane $\mathbb U$. An idea of the argument comes from that by Semmes \cite[Proposition 4.2]{Se}.
This answers the question raised by Shen \cite{Sh19}.

\begin{theorem}\label{vanishing}
For an ${\rm A}_\infty$-weight $\omega$ on $\mathbb R$ with $\alpha=\log \omega \in {\rm VMO}(\mathbb R)$, the complex dilatation $\mu_F$ of
the quasiconformal diffeomorphism $F:\mathbb U \to \mathbb U$ satisfies that
$\frac{1}{t}|\mu_F(x,t)|^2 dxdt$ is a vanishing Carleson measure on $\mathbb U$.
\end{theorem}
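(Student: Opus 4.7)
The approach is to follow the structure of the proof of Theorem \ref{carleson}, but to refine each key estimate so that the constants vanish when the local BMO norm of $\alpha = \log \omega$ does; the VMO hypothesis then supplies precisely the localized smallness needed. As in the bound (3) and the change of variables used in Theorem \ref{carleson}, it suffices to show that
$$
\mathcal{J}(x_0, t) = \frac{1}{t}\int_0^{t^2}\!\!\int_{|x-x_0|<t}\left(\frac{u'(x,\sigma)^2}{u(x,\sigma)^2} + \sigma\,\frac{u''(x,\sigma)^2}{u(x,\sigma)^2}\right) dx\,d\sigma \longrightarrow 0
$$
as $t \to 0$, uniformly in $x_0 \in \mathbb R$. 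The plan is first to establish a quantitative BMO-small version: there are $\epsilon_0 > 0$ and a function $\eta(\epsilon) \to 0$ as $\epsilon \to 0^+$ such that $\|\alpha\|_{{\rm BMO}(3I_0)} \leq \epsilon < \epsilon_0$ implies $\mathcal{J}(x_0, t) \leq \eta(\epsilon)$, where $I_0 = (x_0-t, x_0+t)$. Granting this, the VMO hypothesis yields, for every $\epsilon > 0$, a $\delta > 0$ with $\|\alpha\|_{{\rm BMO}(I)} < \epsilon$ whenever $|I| < \delta$; applying the BMO-small estimate for all $t < \delta/6$ then bounds $\mathcal{J}(x_0, t)$ by $\eta(\epsilon)$ uniformly in $x_0$, and the theorem follows.

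To obtain the quantitative BMO-small estimate, I revisit the constants $C_1, C_2, C_3$ appearing in Lemmas \ref{lem1}, \ref{lem2} and \ref{lem4} and show that each is controlled by the local BMO norm of $\alpha$. The key input is the sharp $A_\infty$ equivalence, a consequence of the John--Nirenberg exponential estimate: $\log\!\left(\tfrac{1}{|I|}\int_I \omega\right) - \tfrac{1}{|I|}\int_I \alpha = O(\|\alpha\|_{{\rm BMO}(I)}^2)$ for small BMO norm, which refines the $C_3$ step in Lemma \ref{lem4}. For the derivative constants, I decompose $\omega = e^{\alpha_I} + (\omega - e^{\alpha_I})$ on an interval $I$ of radius $\sqrt\sigma$ centered at $x$: the constant part vanishes when convolved with $(\Phi_\sigma)'$ or $(\Phi_\sigma)''$, while the fluctuation $\omega - e^{\alpha_I}$ has small $L^1$-average by John--Nirenberg, so its convolution with $(\Phi_\sigma)'$ is $o(u/\sqrt\sigma)$, and likewise $o(u/\sigma)$ for $(\Phi_\sigma)''$. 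Propagating these refined bounds through the arguments of Theorems \ref{thm3} and \ref{thm5} delivers $\mathcal{J}(x_0, t) \leq \eta(\epsilon)$.

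The principal obstacle, foreshadowed in the introduction, is the non-compact support of the heat kernel. In the compactly supported setting of Semmes \cite{Se}, only the values of $\omega$ on $2I_0$ enter the convolutions for $(x,\sigma) \in I_0 \times (0, |I_0|^2/4)$, so $\|\alpha\|_{{\rm BMO}(2I_0)}$ alone governs everything. For $\Phi_\sigma$, $(\Phi_\sigma)'$, and $(\Phi_\sigma)''$ one must instead control tails integrated against $\omega$ over the annuli $2^k I_0 \setminus 2^{k-1}I_0$. By the dyadic decomposition used repeatedly in Section \ref{sec1}, each annular term carries the super-polynomial Gaussian factor $e^{-4^{k-1}}$ that dominates the doubling growth $\rho^k$ or $(2\rho)^k$ of $\omega$. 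The delicate point is to combine this tail control with the BMO-small refinement: the annular contributions for $k \geq 1$ must not re-introduce the doubling-only constants of Lemmas \ref{lem1} and \ref{lem2}, but must themselves be $O(\eta(\epsilon))$ or $o(1)$ as $|I_0| \to 0$. Executing this cleanly for both the $u'/u$ and the $u''/u$ quotients is the main technical work and constitutes the novelty of the present approach, as alluded to in the introduction.
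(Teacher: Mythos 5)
Your proposal takes a genuinely different route from the paper, and the route has a structural flaw in its setup that you only partially acknowledge later on. The paper does \emph{not} go back and refine Theorems~\ref{thm3} and~\ref{thm5}; instead it follows the Semmes strategy directly at the level of the ratio $(\omega\ast\gamma_s)^2/(\omega\ast\phi_s)^2$: it uses Cauchy--Schwarz to replace $(\omega\ast\phi_s)^{-2}$ by the maximal function of $\omega^{-1}$, H\"older to split into two $L^4$ integrals, the strong $L^4$ bounds for the Hardy--Littlewood maximal function and the Littlewood--Paley square function $S_\gamma$, and then John--Nirenberg to turn $\int(\omega-1)^4$ over $I(x_0,Nt)$ into a factor of $\Vert\alpha\Vert_{{\rm BMO}(I(x_0,Nt))}$. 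The normalization $\alpha_{I(x_0,Nt)}=0$ and the decomposition into $I(x_0,Nt)$ and its complement (with the Gaussian tail beating the doubling growth) give the final bound $A(x_0,t)\leq C\bigl(N^2\Vert\alpha\Vert_{{\rm BMO}(I(x_0,Nt))}+N^5e^{-2N}\bigr)^{1/2}$, and the VMO hypothesis finishes it. In your proposal, the square function and maximal function $L^4$ machinery never appears; you propose instead to re-derive smallness from the PDE side.

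The concrete gap is this. Your first paragraph asserts a lemma of the form: $\Vert\alpha\Vert_{{\rm BMO}(3I_0)}\leq\epsilon$ implies $\mathcal J(x_0,t)\leq\eta(\epsilon)$. This cannot hold as stated. Because the heat kernel has unbounded support, $\mathcal J(x_0,t)$ depends on the values of $\omega$ on all of $\mathbb R$, not just on $3I_0$, and the contribution from the far annuli $2^kI_0\setminus 2^{k-1}I_0$ is governed by the \emph{global} doubling constant and the Gaussian decay $e^{-4^{k-1}}$, not by $\epsilon$. Those tail contributions tend to $0$ as the truncation scale $N\to\infty$, but they are $O(1)$, not $O(\eta(\epsilon))$, for any fixed $N$ and arbitrary $\epsilon$. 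What can be proved is a two-parameter estimate $\mathcal J(x_0,t)\leq\eta_1(N,\Vert\alpha\Vert_{{\rm BMO}(NI_0)})+\eta_2(N)$ with $\eta_2(N)\to0$; you gesture at exactly this in your third paragraph, but your initial reduction is built on the single-parameter version, and the subsequent argument needs to be restructured around the $(\epsilon,N)$-game before the VMO hypothesis can be invoked.

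There is a second, separate gap. You say ``propagating these refined bounds through the arguments of Theorems~\ref{thm3} and~\ref{thm5} delivers $\mathcal J(x_0,t)\leq\eta(\epsilon)$,'' but no argument is given, and for Theorem~\ref{thm5} this is far from mechanical. Its proof passes through the Fourier representation $u''=\frac{1}{\sqrt{t}}(\omega\ast\eta_t\ast(\Phi_{t/2})')$ and a double-convolution estimate in which Theorem~\ref{thm3} is applied over the dilated intervals $|y-x_0|<2^{k+1}t$. To make the resulting bound small, every $k$ in that sum must either see a small BMO norm or be killed by the Gaussian; your proposed decomposition $\omega=e^{\alpha_I}+(\omega-e^{\alpha_I})$, with $I$ depending on $(x,\sigma)$, interacts awkwardly with this two-variable structure, and the ``$o(u/\sigma)$'' claim for the $(\Phi_\sigma)''$ convolution of the fluctuation is asserted rather than proved (note that $(\Phi_\sigma)''$ is not signed-definite, and one must control $\int_{2^kI}|\omega-e^{\alpha_I}|$ rather than merely the average over $I$). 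None of this is impossible in principle, but as it stands the two central claims --- the local-implies-small lemma and the refined $u''/u$ bound --- are both unsubstantiated, so the proposal does not yet constitute a proof.
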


\begin{proof}
We use inequality $(3)$ to show that
$$
\frac{1}{t} \int_0^{t} \!\! \int_{|x-x_0|<t} |\mu_F(x,s)|^2 \frac{dx ds}{s} \to 0
$$
uniformly as $t \to 0$. Here, we note that $U_x(x,t)=(\omega \ast \phi_t)(x)$ and each of 
$U_t(x,t)$, $V_x(x,t)$, and $(U_x-V_t)(x,t)$ can be represented by $(\omega \ast \gamma_t)(x)$ explicitly
for a certain $\gamma \in C^{\infty}(\mathbb R)$
such that $\int_{\mathbb R} \gamma(x)dx=0$, $|\gamma|$ is an even function, and $\gamma(x)=O(x^2e^{-x^2})$ $(|x| \to \infty)$. 
For instance, 
$V_x(x,t)=(\omega \ast \psi_t)(x)$
for $\psi(x)=-\frac{2}{\sqrt \pi}xe^{-x^2}$.
We set 
$I(x_0,t)=\{ x \mid |x-x_0|<t\}$.
Then, for the statement, it suffices to prove that 
$$
A(x_0,t)=\frac{1}{t}\int_0^{t} \!\! \int_{I(x_0,t)}\frac{(\omega \ast \gamma_s)(x)^2}{(\omega \ast \phi_s)(x)^2} \frac{dx ds}{s} \to 0
$$
uniformly as $t \to 0$. 

Since $\phi(x) \geq 1/(e\sqrt{\pi})$ for $x \in (-1,1)$, we see that $\phi_t(x-y) \geq 1/(te\sqrt{\pi})$ if $|x-y|<t$.
From this, we have
$$
(\omega \ast \phi_t)(x) \geq \frac{1}{te\sqrt{\pi}}\int_{|x-y|<t}\omega(y) dy.
$$
Moreover, the Cauchy--Schwarz inequality implies that
$$
\left(\frac{1}{2t}\int_{|x-y|<t}\omega(y) dy \right) \left( \frac{1}{2t}\int_{|x-y|<t}\omega(y)^{-1} dy \right) \geq
\left(\frac{1}{2t} \int_{|x-y|<t} \omega(y)^{1/2} \omega(y)^{-1/2}dy\right)^2=1.
$$
Therefore,
$$
(\omega \ast \phi_t)(x)^{-2} \leq c\left( \frac{1}{2t}\int_{|x-y|<t}\omega(y)^{-1} dy \right)^2
$$
for $c=e^2\pi/4$. 
Hence, $A(x_0,t)$ is estimated as follows:
\begin{align*}
A(x_0,t)
&\leq \frac{c}{t} \int_{I(x_0,t)} \int_0^{t}\left( \frac{1}{2s}\int_{|x-y|<s}\omega(y)^{-1} dy \right)^2 
(\omega \ast \gamma_s)(x)^2 \frac{1}{s} ds dx\\
&= \frac{c}{t} \int_{I(x_0,t)} \int_0^{t}\left( \frac{1}{2s}\int_{|x-y|<s}\omega(y)^{-1}1_{I(x_0,Nt)}(y)dy \right)^2 
(\omega \ast \gamma_s)(x)^2 \frac{1}{s} ds dx\\
& \leq \frac{c}{t} \int_{I(x_0,t)} \left(\sup_{s>0} \left\{ \frac{1}{2s}\int_{|x-y|<s}\omega(y)^{-1} 1_{I(x_0,Nt)}(y)dy \right\}\right)^2
\left(\int_0^{t} (\omega \ast \gamma_s)(x)^2 \frac{1}{s}ds \right) dx\\
& \leq \frac{c}{t} \left[\int_{I(x_0,t)} \left(\sup_{s>0} \left\{ \frac{1}{2s}\int_{|x-y|<s}\omega(y)^{-1} 1_{I(x_0,Nt)}(y)dy \right\}\right)^4dx\right]^{1/2}\\
& \qquad \qquad \qquad \qquad \qquad \qquad 
\times \left[\int_{I(x_0,t)}\left(\int_0^{t} (\omega\ast \gamma_s)(x)^2 \frac{1}{s}ds\right)^2 dx\right]^{1/2}.
\tag{4}
\end{align*}
We note that if $x \in I(x_0,t)$ and $|x-y|<s \leq t$
then $y \in I(x_0,Nt)$ for any $N \geq 2$. Thus,
for the equality in the middle line, 
we have replaced $\omega(y)^{-1}$ with
$\omega(y)^{-1} 1_{I(x_0,Nt)}(y)$ taking the product of the characteristic function.  

The integrand of the first factor of $(4)$ is the 4th power of the maximal function
$$
M(\omega^{-1}1_{I(x_0,Nt)})(x)=\sup_{s>0} \left\{ \frac{1}{2s}\int_{|x-y|<s}\omega(y)^{-1} 1_{I(x_0,Nt)}(y)dy \right\}.
$$
The strong $L^4$-estimate of the maximal function implies that
\begin{align*}
\int_{I(x_0,t)} M(\omega^{-1}1_{I(x_0,Nt)})(x)^4 dx &\leq \int_{\mathbb R} M(\omega^{-1}1_{I(x_0,Nt)})(x)^4 dx\\
&\leq C' \int_{\mathbb R} \omega(x)^{-4} 1_{I(x_0,Nt)}(x)dx
=C' \int_{I(x_0,Nt)} \omega(x)^{-4} dx
\end{align*}
for some $C'>0$.

We assume hereafter that $\int_{I(x_0,Nt)} \alpha(x)dx=0$
because adding a constant to $\alpha=\log \omega$ corresponds to a dilation of $f$, which does not change the complex dilatation $\mu_F$.
We remark that once $I(x_0,Nt)$ is given, we can assume this only for $I(x_0,Nt)$ throughout the arguments.
When $x_0$, $t$, or $N$ change, we regard that the assumption is renewed accordingly.
We denote the integral mean of $\alpha$ on a bounded interval $I \subset \mathbb R$ by
$\alpha_I=|I|^{-1} \int_I \alpha(x)dx$.
By the John--Nirenberg inequality, we have
\begin{align*}
&\quad \frac{1}{|I(x_0,Nt)|}\int_{I(x_0,Nt)}\omega(x)^{-4} dx\\  
&\leq \frac{1}{|I(x_0,Nt)|}\int_{I(x_0,Nt)}{\rm exp}(4|\alpha(x)-\alpha_{I(x_0,Nt)}|) dx\\
&= \frac{1}{|I(x_0,Nt)|}\int_{I(x_0,Nt)}({\rm exp}(4|\alpha(x)-\alpha_{I(x_0,Nt)}|) -1)dx + 1\\
&= 4\int_{0}^{\infty}\frac{e^{4\lambda}}{|I(x_0,Nt)|} |\{x \in I(x_0,Nt) : |\alpha(x) - \alpha_{I(x_0,Nt)}|>\lambda\}| d\lambda + 1\\
& \leq 4C_1 \int_0^{\infty} e^{4\lambda} {\rm exp}\left(\frac{-C_2\lambda}{\Vert \alpha \Vert_{{\rm BMO}(I(x_0,Nt))}}\right)d\lambda + 1
= \frac{4 C_1 \Vert \alpha \Vert_{{\rm BMO}(I(x_0,Nt))}}{C_2 - 4\Vert \alpha \Vert_{{\rm BMO}(I(x_0,Nt))}} + 1\\
\end{align*}
for some positive constants $C_1$ and $C_2$, where $\Vert \alpha \Vert_{{\rm BMO}(I)}$ denotes the BMO norm of $\alpha$ on a bounded interval $I$.
Thus, for a sufficiently small $t>0$ with $N$ fixed, this is bounded; as a consequence,  
the integral by $dx$ over $I(x_0,t)$ in the first factor of $(4)$ is bounded by $\widetilde{C'}Nt$
for some uniform constant $\widetilde{C'}>0$.

Next, we  consider the integrand of the second factor of $(4)$.
Since $\int_{\mathbb R} \gamma(x)dx=0$, we can replace the convolution $\omega\ast \gamma_s$ with $(\omega-1)\ast \gamma_s$.
For a sufficiently large $N>0$, we decompose this convolution into the integrals on the interval $I(x_0, Nt)$ and
on its complement $I(x_0, Nt)^c=\mathbb R \setminus I(x_0, Nt)$ and estimate the integrand as
\begin{align*}
&\quad \left(\int_0^{t} ((\omega-1) \ast \gamma_s)(x)^2 \frac{1}{s}ds\right)^2\\
&= \left(\int_0^{t} [((\omega-1) 1_{I(x_0, Nt)}\ast \gamma_s)(x)+((\omega-1) 1_{I(x_0, Nt)^c}\ast \gamma_s)(x)]^2 \frac{1}{s}ds\right)^2\\
&\leq 8 \left( \int_0^{t} ((\omega-1) 1_{I(x_0, Nt)}\ast \gamma_s)(x)^2 \frac{1}{s}ds \right)^2\\
&\qquad \qquad \qquad +8 \left(\int_0^{t} ((\omega-1) 1_{I(x_0, Nt)^c}\ast \gamma_s)(x)^2 \frac{1}{s}ds \right)^2. \tag{5}
\end{align*}

Firstly, we consider the integral of the first term of $(5)$ by $dx$ over $I(x_0,t)$. 
We utilize the Littlewood-Paley function defined by the rapidly decreasing function $\gamma$ with $\int_{\mathbb R} \gamma(x)dx=0$:
$$
S_\gamma((\omega-1)1_{I(x_0, Nt)})(x)=\left(\int_0^{\infty} ((\omega-1) 1_{I(x_0, Nt)}\ast \gamma_s)(x)^2 \frac{1}{s}ds \right)^{1/2}.
$$
Then, the strong $L^4$-estimate of the Littlewood-Paley function (see \cite[p.363]{BCP}) implies that
\begin{align*}
&\quad \int_{I(x_0,t)} \left( \int_0^{t} ((\omega-1) 1_{I(x_0, Nt)}\ast \gamma_s)(x)^2 \frac{1}{s}ds \right)^2 dx \\
&\leq
\int_{I(x_0,t)} S_\gamma((\omega-1)1_{I(x_0, Nt)})(x)^4 dx \\
&\leq \int_{\mathbb R} S_\gamma((\omega-1)1_{I(x_0, Nt)})(x)^4 dx\\
&\leq C'' \int_{\mathbb R} (\omega(x)-1)^41_{I(x_0, Nt)}(x)dx=C'' \int_{I(x_0,Nt)} (\omega(x)-1)^{4} dx
\end{align*}
for some $C''>0$. Here,
applying the John--Nirenberg inequality again with the assumption $\int_{I(x_0,Nt)} \alpha(x)dx=0$,
we have  
\begin{align*}
&\quad \frac{1}{|I(x_0,Nt)|}\int_{I(x_0,Nt)}(\omega(x)-1)^{4} dx\\  
&= \frac{1}{|I(x_0,Nt)|}\int_{I(x_0,Nt)}({\rm exp}(|\alpha(x)-\alpha_{I(x_0,Nt)}|-1)^4 dx\\
&= 4\int_{0}^{\infty}\frac{(e^\lambda-1)^3e^{\lambda}}{|I(x_0,Nt)|} |\{x \in I(x_0,Nt) : |\alpha(x) - \alpha_{I(x_0,Nt)}|>\lambda\}| d\lambda\\
& \leq 4C_1 \int_0^{\infty}e^{4\lambda} {\rm exp}\left(\frac{-C_2\lambda}{\Vert \alpha \Vert_{{\rm BMO}(I(x_0,Nt))}}\right)d\lambda
= \frac{4 C_1 \Vert \alpha \Vert_{{\rm BMO}(I(x_0,Nt))}}{C_2 - 4\Vert \alpha \Vert_{{\rm BMO}(I(x_0,Nt))}}. 
\end{align*}
Thus, for a sufficiently small $t>0$ with $N$ fixed,  
the integral of the first term of $(5)$ by $dx$ over $I(x_0,t)$ is bounded by $\widetilde{C''}Nt \Vert \alpha \Vert_{{\rm BMO}(I(x_0,Nt))}$
for some uniform constant $\widetilde{C''}>0$.

Secondly, we consider the integral of the second term of $(5)$ by $dx$ over $I(x_0,t)$. 
For an estimate of the convolution, we use a fact that the weight $\omega+1$ has the doubling property
with some constant $\rho>1$. We note that $|\gamma|$ is an even function. 
Let $n_0=n_0(s,t,N) \in \mathbb N$ satisfy $2^{n_0-1}=(N-1)t/s$ (we may adjust $N$ so that $n_0$ becomes an integer).
Then, for $x \in I(x_0, t)$, we see that
\begin{align*}
|((\omega-1) 1_{I(x_0, Nt)^c}\ast \gamma_s)(x)| &\leq \int_{|y-x_0| \geq Nt} (\omega(y)+1)|\gamma_s(x-y)|dy\\
&\leq \int_{|y-x_0| \geq (N-1)t} (\omega(y)+1)|\gamma_s(y)|dy\\
&=\sum_{n=n_0}^\infty \int_{2^{n-1}s \leq |y-x_0| < 2^ns}  (\omega(y)+1)|\gamma_s(y)|dy\\
& \leq \sum_{n=n_0}^\infty \rho^n \gamma_s(2^{n-1}s) \int_{|y-x_0| < s}(\omega(y)+1)dy.
\end{align*}

Here, by $\gamma(x)=O(x^2e^{-x^2})$ ($|x| \to \infty$), we have
$$
\rho^n \gamma_s(2^{n-1}s) \leq \frac{D_1}{s} \frac{(4\rho)^n}{e^{4^n}}
$$
for some $D_1>0$. For $n \geq n_0(s,t,N)$, we may assume that $(4\rho)^n/e^{2^n} \leq 1$.
By $2^{n_0-1} \geq N-1$, this holds when $N$ is sufficiently large.
Moreover,
$$
\int_{|y-x_0| < s}(\omega(y)+1)dy \leq 2s+\int_{I(x_0,Nt)}\omega(y)dy \leq 2s+D_2Nt
$$
for some $D_2>0$. This estimate of the integral of $\omega$ over $I(x_0,Nt)$ is carried out in a similar way as before by using the
John--Nirenberg inequality when $t$ is sufficiently small with $N$ fixed. 
Therefore, we obtain that if $x \in I(x_0, t)$ then
\begin{align*}
|((\omega-1) 1_{I(x_0, Nt)^c}\ast \gamma_s)(x)|
&\leq \frac{D_1(2s+D_2Nt)}{s}\sum_{n=n_0}^\infty \frac{(4\rho)^n}{e^{2^n}}\frac{1}{e^{2^{n-1}}}\\
&\leq  \frac{DNt}{s}\exp\left(-\frac{Nt}{2s}\right)
\end{align*}
for some uniform constant $D>0$.

We will complete the estimate concerning the second term of $(5)$. By the above inequality, we have
$$
\int_0^t((\omega-1) 1_{I(x_0, Nt)^c}\ast \gamma_s)(x)^2 \frac{1}{s}ds \leq
(DNt)^2 \int_0^t \frac{\exp\left(-\frac{Nt}{s}\right)}{s^3}ds \leq D^2N^2 e^{-N}
$$
for $x \in I(x_0, t)$. For the last inequality, we have used a fact that 
$$
\max_{0 \leq s \leq t} \frac{\exp\left(-\frac{Nt}{s}\right)}{s^3}=\frac{e^{-N}}{t^3}
$$
whenever $N \geq 3$. Hence,
$$
\int_{I(x_0, t)}\left(\int_0^t((\omega-1) 1_{I(x_0, Nt)^c}\ast \gamma_s)(x)^2 \frac{1}{s}ds \right)^2dx
\leq 2D^4 tN^4 e^{-2N}. 
$$

Finally, we substitute what we have obtained into (4) and complete the proof. By replacing $16D^4$ with $\widetilde D$, we conclude that
\begin{align*}
A(x_0,t) &\leq \frac{c}{t}(\widetilde{C'}Nt)^{1/2}(\widetilde{C''}Nt \Vert \alpha \Vert_{{\rm BMO}(I(x_0,Nt))}+\widetilde DtN^4 e^{-2N})^{1/2}\\
&\leq C(N^2\Vert \alpha \Vert_{{\rm BMO}(I(x_0,Nt))}+N^5 e^{-2N})^{1/2},
\end{align*}
where we cleared up the last line by introducing the final constant $C>0$.
Now, for an arbitrary positive $\varepsilon>0$, we choose a sufficiently large $N>0$ that satisfies
$$
N^5 e^{-2N} \leq \frac{\varepsilon^2}{2C^2},
$$
and fix it. Then, for this fixed $N$, we can find some $\delta>0$ such that if $t \leq \delta$ then
$$
N^2\Vert \alpha \Vert_{{\rm BMO}(I(x_0,Nt))} \leq \frac{\varepsilon^2}{2C^2}.
$$
This is because $\alpha \in {\rm VMO}(\mathbb R)$. Thus, if $t \leq \delta$ then $A(x_0,t) \leq \varepsilon$,
independently of $x_0$.
\end{proof}

\begin{remark}
Conversely, a quasiconformal homeomorphism $F:\mathbb U \to \mathbb U$ with $\frac{1}{t}|\mu_F(x,t)|^2 dxdt$ a
vanishing Carleson measure extends continuously to $f:\mathbb R \to \mathbb R$ as a strongly symmetric homeomorphism.
See \cite{Sh19}.
\end{remark}

\end{document}